\newcommand{\I}{\mathbf 1}
\newcommand{\N}{\mathbf N}
\newcommand{\Q}{\mathbf Q}
\newcommand{\Z}{\mathbf Z}
\newcommand{\bG}{\mathbf G}
\newcommand{\sA}{\mathcal A}
\newcommand{\sC}{\mathcal C}
\newcommand{\sJ}{\mathcal J}
\newcommand{\sL}{\mathcal L}
\newcommand{\sM}{\mathcal M}
\newcommand{\sP}{\mathcal P}
\newcommand{\sV}{\mathcal V}
\newcommand{\iso}{\xrightarrow{\sim}}
\newcommand{\nd}{\nobreakdash-\hspace{0pt}}
\renewcommand{\theenumi}{(\roman{enumi})}
\DeclareMathOperator*{\colim}{colim}
\DeclareMathOperator{\End}{End}
\DeclareMathOperator{\Hom}{Hom}
\DeclareMathOperator{\pr}{pr}
\DeclareMathOperator{\REP}{REP}
\DeclareMathOperator{\Spec}{Spec}
\DeclareMathOperator{\tr}{tr}
\newtheorem{thm}{Theorem}[section]
\newtheorem{lem}[thm]{Lemma}
\newtheorem{prop}[thm]{Proposition}
\theoremstyle{definition}
\newtheorem*{thm*}{Theorem}
\newtheorem*{defn*}{Definition}
\numberwithin{equation}{section}
\begin{document}

\title{A finiteness theorem for algebraic cycles}

\author{Peter O'Sullivan}
\address{Centre for Mathematics and its Applications\\
Australian National University, Canberra ACT 0200 \\
Australia}
\email{peter.osullivan@anu.edu.au}
\thanks{The author was partly supported by ARC Discovery Project grant DP0774133.}

\subjclass[2000]{14C15, 14C25}

\keywords{algebraic cycle, Chow ring}

\date{}

\dedicatory{}

\begin{abstract}
Let $X$ be a smooth projective variety.
Starting with a finite set of cycles on powers $X^m$ of $X$, we consider the
$\Q$\nd vector subspaces of the $\Q$\nd linear Chow groups of the $X^m$
obtained by iterating the algebraic operations
and pullback and push forward along those morphisms $X^l \to X^m$ for which
each component $X^l \to X$ is a projection.
It is shown that these $\Q$\nd vector subspaces are all finite-dimensional, provided
that the $\Q$\nd linear Chow motive of $X$ is a direct summand of that of an abelian variety.
\end{abstract}

\maketitle

\section{Introduction}

Let $X$ be a  smooth projective variety over a field $F$.
Starting with a finite set of cycles on powers $X^m$ of $X$,
consider the $\Q$\nd vector subspaces $C_m$ of the $\Q$\nd linear Chow groups $CH(X^m)_\Q$
formed by iterating the
algebraic operations and pullback $p^*$ and push forward $p_*$ along those morphisms
$p:X^l \to X^m$ for which each component $X^l \to X$ is a projection.
It is plausible that the $C_m$ are always finite-dimensional,
because when $F$ is finitely generated over the prime field it is plausible that the
$CH(X^m)_\Q$ themselves are finite-dimensional.
In this paper we prove the finite-dimensionality of the $C_m$
when the $\Q$\nd linear Chow motive of $X$ is a direct summand of that of an abelian
variety over $F$.

More precisely, suppose given an adequate
equivalence relation $\sim$ on $\Q$\nd linear cycles on smooth projective
varieties over $F$.
We say that $X$ is a \emph{Kimura variety for $\sim$} if, in the
category of $\Q$\nd linear Chow motives modulo $\sim$, the motive of
$X$ is the direct sum of one for which some exterior power is $0$ and
one for which some symmetric power is $0$.
A Kimura variety for $\sim$ is also a Kimura variety for any coarser equivalence relation.
It is known (e.g.\ \cite{Kim}, Corollary~4.4) that if the $\Q$\nd linear Chow motive of $X$ is a direct summand of that of
an abelian variety, then $X$ is a Kimura variety for any $\sim$.
The main result is now Theorem~\ref{t:fin} below.
In addition to a finiteness result, it contains also a nilpotence result.
By a filtration $C^\bullet$ on a graded $\Q$\nd algebra $C$ we mean a descending
sequence $C = C^0 \supset C^1 \supset \dots$ of graded ideals of $C$
such that $C^r.C^s \subset C^{r+s}$ for every $r$ and $s$.
The morphisms $p:X^l \to X^m$ in Theorem~\ref{t:fin}~\ref{i:pullpush} are
exactly those defined by maps $[1,m] \to [1,l]$.

\begin{thm}\label{t:fin}
Let $X$ be a smooth projective variety over $F$ which is a Kimura variety
for the equivalence relation ${\sim}$.
For $n = 0,1,2, \dots $, let $Z_n$ be a finite subset of $CH(X^n)_\Q/{\sim}$ , with $Z_n$ empty
for $n$ large.
Then there exists for each $n$ a graded $\Q$\nd subalgebra
$C_n$ of $CH(X^n)_\Q/{\sim}$, and a filtration $(C_n)^\bullet$ on $C_n$,
with the following properties.
\begin{enumerate}
\renewcommand{\theenumi}{(\alph{enumi})}
\item\label{i:pullpush}
If $p:X^l \to X^m$ is a morphism for which each component $X^l \to X$ is a projection,
then $p^*$ sends $C_m$ to $C_l$ and $p_*$ sends $C_l$ to $C_m$, and $p^*$ and $p_*$
respect the filtrations on $C_l$ and $C_m$.
\item\label{i:fin}
For every $n$, the $\Q$\nd algebra $C_n$ is finite-dimensional and contains $Z_n$.
\item\label{i:nilp}
For every $n$, the cycles in $C_n$ which are numerically equivalent to $0$ lie in $(C_n)^1$,
and we have $(C_n)^r = 0$ for $r$ large.
\end{enumerate}
\end{thm}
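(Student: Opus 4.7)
The plan exploits two consequences of the Kimura condition: first, modulo numerical equivalence the Chow groups of all powers $X^n$ are finite-dimensional, which controls the ``semisimple part'' of $C_n$; second, the kernel of numerical equivalence in $CH(X^n)_\Q/{\sim}$ is a nilpotent ideal of bounded index (Kimura's nilpotence theorem), which controls the ``nilpotent part'' and supplies the filtration needed for property~(iii).

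Write $h(X) = h^+ \oplus h^-$ with $\Sym^N h^+ = 0$ and $\Lambda^N h^- = 0$ in Chow motives modulo ${\sim}$. Then every $h(X^n) = h(X)^{\otimes n}$ is Kimura finite, and a Schur--Weyl decomposition combined with the vanishing of high symmetric and exterior powers shows that, in the semisimple category of numerical motives, $h(X^n)$ decomposes as a finite direct sum of Schur functors applied to $h^{\pm}$ with bounded partition shapes. Consequently each $\bar{C}_n := CH(X^n)_\Q/{\sim_{\mathrm{num}}}$ is a finite-dimensional graded $\Q$-algebra, and this collection is automatically stable under pullback and pushforward along projection-type morphisms. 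Kimura's nilpotence theorem yields, for each $n$, a bound $k(n)$ with $I_n^{k(n)} = 0$, where $I_n \subset CH(X^n)_\Q/{\sim}$ denotes the ideal of cycles numerically equivalent to $0$.

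I would then construct $C_n$ by lifting. For each $n$, choose a finite set $S_n \subset CH(X^n)_\Q/{\sim}$ lifting a spanning set of $\bar{C}_n$ and containing $Z_n$. Let $C_n$ be the smallest graded $\Q$-subalgebra of $CH(X^n)_\Q/{\sim}$ containing every $S_n$ and stable under pullback and pushforward along projection-type morphisms. Equip $C_n$ with the filtration $(C_n)^r := (C_n \cap I_n)^r$. Kimura's nilpotence then gives $(C_n)^r = 0$ for $r \geq k(n)$, which, together with the fact that numerically trivial elements of $C_n$ lie in $C_n \cap I_n = (C_n)^1$, establishes~(iii). Property~(i) follows because $p^*$ and $p_*$ preserve numerical equivalence and commute with products, so carry the filtration to itself, while (ii) holds provided each $C_n$ can be shown finite-dimensional.

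The essential obstacle is exactly this finite-dimensionality of $C_n$. Since $C_n/(C_n \cap I_n)$ embeds in the finite-dimensional $\bar{C}_n$ and $C_n \cap I_n$ is nilpotent, the standard fact that a finitely generated commutative $\Q$-algebra with finite-dimensional reduction modulo a nilpotent ideal is itself finite-dimensional would close the argument, \emph{provided} one shows $C_n$ is finitely generated as a $\Q$-algebra. The difficulty is that the infinitely many pullback and pushforward operations coming from varying $l$ and $m$ could, a priori, contribute infinitely many generators. One should control this using that the category of projection-type morphisms is equivalent to the opposite of the category of finite sets, hence generated as a symmetric monoidal category by a small set of basic maps (diagonal, projection off a factor, transposition), combined with the observation that at the numerical level closure already terminates in a finite-dimensional object. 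A promising strategy is to build $C_\bullet$ inductively on the Schur--Weyl complexity of cycles rather than by brute iteration of the closure, so that only finitely many lifts are needed at each step to realize the finite-dimensional numerical target; making this combinatorial control rigorous is the main work of the proof.
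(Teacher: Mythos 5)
Your proposal correctly isolates the crux --- the finite-dimensionality of $C_n$ --- but then leaves it unproved, and this is not a routine verification: it is the entire content of the theorem. The reduction you offer (``$C_n/(C_n\cap I_n)$ embeds in the finite-dimensional $\overline{CH}(X^n)_\Q$ and $C_n\cap I_n$ is nilpotent, so it suffices that $C_n$ be finitely generated'') is sound as far as it goes, but nilpotence of $C_n\cap I_n$ together with finite-dimensionality of the quotient gives nothing without finite generation: a square-zero extension of $\Q$ by an infinite-dimensional vector space already defeats it. The danger is precisely that the pushforwards $p_*$ from $X^l$ with $l$ unbounded could feed infinitely many linearly independent numerically trivial classes into $C_m$; the finiteness of the numerical target does not bound the lifts, and the gesture toward ``Schur--Weyl complexity'' is not an argument. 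The paper's proof exists to supply exactly this control, and it does so by a quite different mechanism: it constructs (Sections 3--6) a reductive group $G = GL_{l_0}\times GL_{l_1}$, a finitely generated commutative algebra $R'$ in $\REP(G,\rho)$ with $R'^G=\Q$, and isomorphisms $\xi'_{0,n}$ identifying the relevant spaces of cycles with $((E_{R'})^{\otimes n})^G$; finite-dimensionality then follows from the classical invariant-theoretic finiteness of $N^G$ for $N$ a finitely generated module over such an algebra (Lemma~\ref{l:repfin}\ref{i:algfin}), and the filtration comes from the powers of the maximal $G$\nobreakdash-ideal $J'$, with $(J'^rN)^G=0$ for large $r$ (Lemma~\ref{l:repfin}\ref{i:idealcompl}). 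No analogue of this finiteness input appears in your plan.

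Two secondary points. First, your appeal to ``Kimura's nilpotence theorem'' for the statement $I_n^{k(n)}=0$, with $I_n$ the ideal of numerically trivial cycles in the \emph{Chow ring} of $X^n$ under the intersection product, is not a citation of an existing result: Kimura's Proposition~7.5 concerns the correspondence algebra under composition and yields only nilpotence of individual elements, and the paper explicitly remarks that its nilpotence statement neither implies nor is implied by the Andr\'e--Kahn ideal-nilpotence. So this step would itself need proof (the paper obtains it instead from the vanishing of $(J'^rN)^G$, and gets the first half of~\ref{i:nilp} by showing that an element of $C_n$ outside $(C_n)^1$ admits a left inverse, hence is not numerically trivial). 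Second, your insistence on lifting a spanning set of all of $\overline{CH}(X^n)_\Q$ for every $n$ is unnecessary for the theorem and only enlarges the algebra you must control; the statement requires only that $C_n$ contain the given finite sets $Z_n$.
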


The finiteness result of Theorem~\ref{t:fin} is non-trivial only
if $Z_n$ is non-empty for some $n > 1$.
Indeed it will follow from Proposition~\ref{p:Chowsub} below that
for any smooth projective variety $X$ over $F$ and finite subset $Z_1$ of $CH(X)_\Q$,
there is a finite-dimensional graded $\Q$\nd subalgebra $C_n$ of $CH(X^n)_\Q$ for each $n$ such that
$C_1$ contains $Z_1$ and $p^*$ sends $C_m$ to $C_l$ and $p_*$ sends $C_l$ to $C_m$
for every $p$ as in \ref{i:pullpush} of Theorem~\ref{t:fin}.

If $X$ is a Kimura variety for $\sim$, then the ideal of correspondences
numerically equivalent to $0$ in the algebra $CH(X \times_F X)_\Q/{\sim}$
of self-correspondences of $X$ has been shown by Kimura (\cite{Kim}, Proposition~7.5)
to consist of nilpotent elements, and by Andr\'e and Kahn (\cite{AndKah}, Proposition~9.1.14)
to be in fact nilpotent.
The nilpotence result of Theorem~\ref{t:fin} implies that of Kimura, but neither
implies nor is implied by that of Andr\'e and Kahn.

If $\sim$ is numerical equivalence, then the $CH(X^m)_\Q/{\sim} = \overline{CH}(X^m)_\Q$
are all finite dimensional.
The following result shows that they are generated in a suitable sense by
the $\overline{CH}(X^m)_\Q$ for $m$ not exceeding some fixed $n$.

\begin{thm}\label{t:num}
Let $X$ be a smooth projective variety over $F$ which is a Kimura variety
for numerical equivalence.
Then there exists an integer $n \ge 0$ with the following property:
for every $m$, the $\Q$\nd vector space $\overline{CH}(X^m)_\Q$
is generated by elements of the form
\begin{equation}\label{e:numgen}
p_*((p_1)^*(z_1).(p_2)^*(z_2). \cdots .(p_r)^*(z_r)),
\end{equation}
where $z_i$ lies in $\overline{CH}(X^{m_i})_\Q$ with $m_i \le n$,
and $p:X^l \to X^m$ and the $p_i:X^l \to X^{m_i}$
are morphisms for which each component $X^l \to X$ is a projection.
\end{thm}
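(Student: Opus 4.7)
The plan is to deduce the theorem from Theorem~\ref{t:fin} applied to a maximal input, then to check that the resulting subalgebras exhaust everything using Jannsen semisimplicity and Kimura finiteness.

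Since $X$ is Kimura for numerical equivalence, Jannsen's theorem gives that the category of numerical motives over $F$ with $\Q$\nd coefficients is semisimple abelian and $\Q$\nd linear; in particular each $\overline{CH}(X^k)_\Q$ is finite-dimensional. Fix $n$ to be determined, take $Z_k$ a $\Q$\nd basis of $\overline{CH}(X^k)_\Q$ for $k \le n$ and $Z_k = \emptyset$ for $k > n$, and apply Theorem~\ref{t:fin}. One obtains finite-dimensional graded subalgebras $C_m \subseteq \overline{CH}(X^m)_\Q$ closed under the operations of \ref{i:pullpush}, with $C_k = \overline{CH}(X^k)_\Q$ for $k \le n$ by \ref{i:fin}. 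The projection formula together with base change for projection-component morphisms lets any iterated combination of these operations be rewritten as a single expression of the form (\ref{e:numgen}) with $m_i \le n$; hence $C_m$ coincides with the span of such expressions. The theorem then reduces to showing that $C_m = \overline{CH}(X^m)_\Q$ for every $m$, for an appropriately chosen $n$.

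To establish this, let $\sM$ denote the thick rigid tensor subcategory of numerical motives generated by $h(X)$ and the Tate motive; it is semisimple. The Kimura decomposition $h(X) = M^+ \oplus M^-$ (with some exterior power of $M^+$ and some symmetric power of $M^-$ zero), combined with the standard analysis of Schur functors applied to $M^\pm$, implies that $\sM$ has only finitely many isomorphism classes of simple objects modulo Tate twist. Now choose $n$ large enough that every such simple is, up to Tate twist, a direct summand of $h(X^k)$ for some $k$ with $2k \le n$; the associated idempotent $\pi_S \in \End_\sM(h(X^k)) = \overline{CH}(X^{2k})_\Q$ then automatically lies in $C_{2k}$.

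For such $n$, a given $z \in \overline{CH}(X^m)_\Q$ decomposes, via $h(X^m) = \bigoplus_S S^{n_{m,S}}$ in $\sM$, as $z = \sum_S z_S$ with $z_S$ factoring through $S^{n_{m,S}}$. Each $z_S$ can be written as the composition of some $\tilde z_S \in \overline{CH}(X^{k_S})_\Q$ (with $k_S \le n/2$) with a correspondence $h(X^{k_S}) \to h(X^m)$ assembled out of the $\pi_S$, permutations, diagonals, and the Poincar\'e trace $h(X)^{\otimes 2} \to \I(\dim X)$ --- all elementary morphisms whose representing cycles lie in $C_k$ for some $k \le n$. The core technical step, and the main obstacle, is an induction on $m$ showing that every structural correspondence needed to realize the decomposition of $h(X^m)$ in $\sM$ is itself in $C_\bullet$; this packages the Tannakian generation of morphisms in $\sM$ into the combinatorial framework of (\ref{e:numgen}). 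Once this is verified, $C_m = \overline{CH}(X^m)_\Q$ holds for all $m$, and the theorem follows.
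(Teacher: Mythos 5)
Your reduction to showing $C_m = \overline{CH}(X^m)_\Q$ is fine, but the argument you give for that equality has two problems, one of which is fatal as stated. First, the structural claim that the tensor category $\sM$ generated by $h(X)$ has only finitely many simple objects modulo Tate twist is false in general. For an elliptic curve $E$ without CM, the objects $\bigwedge^k h^1(E)$ (exterior powers taken with the super symmetry, i.e.\ the ordinary symmetric powers of the underlying rank-two object) are nonzero, simple, and pairwise non-isomorphic even after arbitrary Tate twists, since their ranks $\pm(k+1)$ all differ; the Kimura vanishing $S^3h^1(E)=0$ only bounds the number of \emph{columns} of the admissible Schur functors, leaving infinitely many partitions. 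So there is no $n$ with the property you require, and the isotypic bookkeeping that follows cannot get off the ground. Second, even granting some finite list of simples, the step you yourself flag as ``the core technical step'' is precisely the content of the theorem and is nowhere argued: the morphisms $h(X^{k_S}) \to h(X^m)$ realizing a copy of $S$ inside $h(X^m)$ are elements of $\overline{CH}(X^{k_S+m})_\Q$ with $k_S+m$ arbitrarily large, and there is no a priori reason they are assembled from idempotents, permutations, diagonals and traces, i.e.\ that they lie in $C_{k_S+m}$. Asserting this is assuming what is to be proved.

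The missing finiteness input is supplied in the paper by a quite different mechanism. All the hom-spaces $\Hom(h(X)^{\otimes r},h(X)^{\otimes s})$ are encoded as $\Hom_G(E^{r,s},R)$ for the commutative algebra $R$ in $\REP(G,\rho)$ constructed in Sections~\ref{s:Kimobj}--\ref{s:Kimvar}. Under numerical equivalence one shows $R$ is $G$\nd simple (every nonzero $h(X)^{\otimes m}\to\I$ has a right inverse), whence $R_1=0$ and, by Magid's theorem on $G$\nd simple algebras under a reductive group, $R$ is a \emph{finitely generated} $\Q$\nd algebra. The integer $n$ is then chosen so that generators of $R$ lie in the images of the $G$\nd maps $E^{r,s}\to R$ with $r+s\le n$, and semisimplicity of $\REP(G,\rho)$ converts ``the subalgebra $R'$ generated by the small pieces equals $R$'' into ``$W_{r,s}=\Hom_G(E^{r,s},R)$ for all $r,s$'', i.e.\ $U_m=\overline{CH}(X^m)_\Q$. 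If you want to repair your argument you need a substitute for this finite-generation statement; counting simples modulo Tate twist will not provide one.
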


Theorem~\ref{t:fin} will be proved in Section~\ref{s:finproof} and
Theorem~\ref{t:num} in Section~\ref{s:numproof}.
Both theorems are deduced from the following fact.
Given a Kimura variety $X$ for $\sim$, there is a reductive group $G$
over $\Q$, a finite-dimensional $G$\nd module $E$,
a central $\Q$\nd point $\rho$ of $G$ with $\rho^2 = 1$, and a commutative
algebra $R$ in the tensor category $\REP(G,\rho)$ of $G$\nd modules
with symmetry modified by $\rho$, with the following property.
If we write $\sM_{\sim}(F)$ for the category of ungraded motives over $F$ modulo ${\sim}$ and
$E_R$ for the free $R$\nd module $R \otimes_k E$ on $E$, then
there exist isomorphisms
\[
\xi_{r,s}:\Hom_{G,R}((E_R)^{\otimes r},(E_R)^{\otimes s}) \iso
\Hom_{\sM_{\sim}(F)}(h(X)^{\otimes r},h(X)^{\otimes s})
\]
which are compatible with composition and tensor products of morphisms and
with symmetries interchanging the factors $E_R$ and $h(X)$.
These isomorphisms arise because there is a fully faithful tensor functor
from the category of finitely generated free $R$\nd modules to $\sM_{\sim}(F)$,
which sends $E_R$ to $h(X)$ (see \cite{O}, Lemma~3.3 for a similar result).
However to keep the exposition as brief as possible, the $\xi_{r,s}$ will
simply be constructed directly here, in Sections~\ref{s:Kimobj} and \ref{s:Kimvar}.
Now we have an equality
\[
CH(X^r)_\Q/{\sim} = \Hom_{\sM_{\sim}(F)}(\I,h(X)^{\otimes r})
\]
of $\Q$\nd algebras, and pullback along a morphism $f:X^l \to X^m$ is given by composition
with $h(f)$.
There is a canonical autoduality $h(X)^{\otimes 2} \to \I$ on $h(X)$,
and push forward along $f$ is given by composing with the transpose of $h(f)$ defined
by this autoduality.
Using the isomorphisms $\xi_{r,s}$, Theorems~\ref{t:fin} and \ref{t:num} then reduce to easily
solved problems about the $G$\nd algebra $R$.

\section{Generated spaces of cycles}

The following result gives an explicit description of the spaces of cycles
generated by a given set of cycles on the powers of an arbitrary
smooth projective variety $X$.
By the top Chern class of $X$ we mean the element $(\Delta_X)^*(\Delta_X)_*(1)$
of $CH(X)_\Q$, where $\Delta_X:X \to X^2$ is the diagonal.
We define the tensor product $z \otimes z'$ of $z$ in $CH(X)_\Q$
and $z'$ in $CH(X')_\Q$ as $\pr_1{}\!^*(z).\pr_2{}\!^*(z')$ in
$CH(X \times_F X')_\Q$.
The push forward of $z \otimes z'$ along a morphism $f \times f'$ is then
$f_*(z) \otimes f'{}_*(z')$.

\begin{prop}\label{p:Chowsub}
Let $X$ be a smooth projective variety over $F$.
For $m = 0,1,2, \dots$ let $Z_m$ be a subset of $CH(X^m)_\Q$,
such that $Z_1$ contains the top Chern class of $X$.
Denote by $C_m$ the $\Q$\nd vector subspace of $CH(X^m)_\Q$
generated by elements of the form
\begin{equation}\label{e:pullpush}
p_*((p_1)^*(z_1).(p_2)^*(z_2). \cdots .(p_n)^*(z_r)),
\end{equation}
where $z_i$ lies in $Z_{m_i}$,
and $p:X^j \to X^m$ and the $p_i:X^j \to X^{m_i}$
are morphisms for which each component $X^j \to X$ is a projection.
Then $C_m$ is a $\Q$\nd subalgebra of $CH(X^m)_\Q$ for each $m$.
If $q:X^l \to X^m$ is a morphism for which each component $X^l \to X$
is a projection, then $q^*$ sends $C_m$ into $C_l$ and
$q_*$ sends $C_l$ into $C_m$.
\end{prop}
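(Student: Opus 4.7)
I plan to prove the proposition by showing that the set $\Phi_m$ of expressions of the form \eqref{e:pullpush} is stable, up to $\Q$\nd linear combinations, under product in $CH(X^m)_\Q$, pull-back $q^*$, and push-forward $q_*$, for $q$ a coordinate morphism (by which I mean a morphism of the form in the proposition). Closure under $q_*$ is immediate: for $\alpha = p_*(\prod_i p_i^*(z_i)) \in \Phi_l$ with $p:X^j\to X^l$, one has $q_*\alpha = (qp)_*(\prod_i p_i^*(z_i))$, and $qp:X^j\to X^m$ is again a coordinate morphism.

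The key technical input is the following base change. Given coordinate morphisms $p:X^j\to X^m$ and $q:X^{j'}\to X^m$, form the fibre product $F = X^j\times_{X^m} X^{j'}$; as a subscheme of $X^{j+j'}$ it is cut out by equating, for each $i\in[1,m]$, the coordinate of $X^j$ selected by $p$ with the one of $X^{j'}$ selected by $q$. This subscheme is canonically isomorphic to $X^{j''}$, where $j''$ is the number of classes of the equivalence relation on $[1,j]\sqcup[1,j']$ generated by those pairs, and the projections $q':X^{j''}\to X^j$ and $p':X^{j''}\to X^{j'}$ (parallel in the Cartesian square to $q$ and $p$ respectively) are themselves coordinate morphisms. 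Factoring every coordinate morphism as a permutation, a projection (flat), and a partial diagonal (a regular embedding with normal bundle a direct sum of pull-backs of $TX$), the combination of flat base change for projections and the self-intersection formula for diagonals yields
\[
q^* p_*(z) = p'_*\!\bigl((q')^*(z)\cdot e\bigr),
\]
where $e\in CH(X^{j''})_\Q$ is a product of pull-backs of $c_{\mathrm{top}}(TX)\in Z_1$ along coordinate morphisms $X^{j''}\to X$.

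Closure of $\Phi_m$ under pull-back by $q:X^l\to X^m$ is immediate from the base change: $q^*\alpha = p'_*\!\bigl(\prod_i(p_i q')^*(z_i)\cdot e\bigr)$, which is an element of $\Phi_l$ once $e$ is expanded. Closure under products follows from combining the base change with the projection formula. For $\alpha = p_*(A)$ with $A = \prod_i p_i^*(z_i)$ and $\beta = q_*(B)$ with $B = \prod_k q_k^*(w_k)$, the projection formula gives $\alpha\cdot\beta = p_*(A\cdot p^*\beta)$; applying the base change in the form $p^*q_*(B) = q'_*((p')^*(B)\cdot e)$ and the projection formula once more,
\[
\alpha\cdot\beta = p_*\!\bigl(A\cdot q'_*((p')^*(B)\cdot e)\bigr) = (pq')_*\!\bigl((q')^*(A)\cdot(p')^*(B)\cdot e\bigr).
\]
Every factor in the argument of $(pq')_*$ is a pull-back of an element of some $Z_n$ along a coordinate morphism, so $\alpha\cdot\beta\in\Phi_m$.

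The principal obstacle is establishing the base change identity with the stated form of $e$. The self-intersection formula identifies the normal bundle of a partial diagonal $X^n\to X^{n+k}$ (gluing $k$ pairs of coordinates) as a direct sum of $k$ pull-backs of $TX$, so its top Chern class is a product of $k$ pull-backs of $c_{\mathrm{top}}(TX)$; this is precisely why the hypothesis that $Z_1$ contain $c_{\mathrm{top}}(TX)$ is imposed. Once the base change is in hand, the algebra above is routine.
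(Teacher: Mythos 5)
Your proposal is correct, but it takes a genuinely different route from the paper. The paper's proof is purely formal: it never forms fibre products or invokes normal bundles, but instead factors $q$ into symmetries, projections and elementary diagonals $e_m = X^{m-2}\times\Delta_X$, introduces the auxiliary algebra $W_m$ generated by pullbacks of the $Z_{m'}$, uses the graph factorisation $p = w_{j,m}\circ\Gamma_p$ to reduce to the case where $p$ is split injective, and finally unwinds everything via the projection formula to the single class $(\Delta_X)^*(\Delta_X)_*(1)\in Z_1$ (which is how the paper \emph{defines} the top Chern class, precisely so that no appeal to $TX$ is needed). Your proof instead establishes one explicit base-change identity $q^*p_*(z)=p'_*((q')^*(z)\cdot e)$ for the Cartesian square of two coordinate morphisms, after which closure under $q^*$ and under products each follow in one line; this is stronger and more transparent, but it leans on heavier machinery. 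One point to be careful about: the ``self-intersection formula for diagonals'' alone does not cover the general Cartesian square you use — you need the excess intersection formula (or, equivalently, refined base change), since the square of a partial diagonal against an arbitrary coordinate morphism need not be transverse. Your elementary factorisation does rescue this cheaply: against a single $e_s$, the square is either transverse (expected dimension, zero excess, and one needs the standard compatibility of Gysin pullback with proper pushforward for such squares) or else $p$ factors through $e_s$, in which case $e_s{}^*(e_s)_*(w)=w\cdot e_s{}^*(e_s)_*(1)$ follows from the projection formula and genuine self-intersection; composing these steps shows $e$ is indeed a product of pullbacks of $c_{\mathrm{top}}(TX)$ along coordinate morphisms, as you assert. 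With that caveat made explicit, your argument is complete, and it yields as a by-product a closed formula for $q^*p_*$ that the paper's more austere argument does not provide.
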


\begin{proof}
Write $\sP_{l,m}$ for the set of morphisms $X^l \to X^m$ for which each
component $X^l \to X$ is a projection.
Then the composite of an element of $\sP_{j,l}$ with an element of $\sP_{l,m}$
lies in $\sP_{j,m}$.
Thus $q_*(C_l) \subset C_m$ for $q$ in $\sP_{l,m}$.
Similarly the product of two elements of $\sP_{j,m}$ lies in
$\sP_{2j,2m}$.
Thus the tensor product of two elements of $C_m$ lies in $C_{2m}$.
Since the product of two elements of $CH(X^m)_\Q$ is the pullback
of their tensor product along $\Delta_{X^m} \in \sP_{m,2m}$,
it remains only to show that
\begin{equation}\label{e:qC}
q^*(C_m) \subset C_l
\end{equation}
for $q$ in $\sP_{l,m}$.
This is clear when $l = m$ and $q$ is a symmetry $\sigma$
permuting the factors $X$ of $X^m$,
because $\sigma^* = (\sigma^{-1})_*$.
An arbitrary $q$ factors for some $l'$ as $q'' \circ q'$ with $q'$ in
$\sP_{l,l'}$ a projection and $q''$ in  $\sP_{l',m}$ a closed immersion.
It thus is enough to show that \eqref{e:qC} holds when $q$ is a projection
or $q$ is a closed immersion.

Suppose that $q$ is a projection.
If we write
\[
w_{s,n}:X^{s+n} \to X^n
\]
for the projection onto the last $n$ factors,
then $q$ is a composite of a symmetry with $w_{l-m,m}$.
Thus \eqref{e:qC} holds because $w_{l-m,m}{}^* = 1 \otimes -$ sends $C_m$ into $C_l$.

Suppose that $q$ is a closed immersion.
Then $q$ is a composite of the
\[
e_s = X^{s-2} \times \Delta_X:X^{s-1} \to X^s
\]
for $s \ge 2$ and  symmetries.
To prove \eqref{e:qC}, we may thus suppose that
$m \ge 2$ and $q = e_m$.

Denote by $W_m$ the $\Q$\nd subalgebra of $CH(X^m)_\Q$ generated by the
$v^*(Z_{m'})$ for any $m'$ and $v$ in $\sP_{m,m'}$.
Then $u^*$ sends $W_m$ into $W_l$ for any $u$ in $\sP_{l,m}$, and
by the projection formula $C_m$ is a $W_m$\nd submodule of $CH(X^m)_\Q$.
Since $C_m$ is the sum of the $p_*(W_j)$ with $p$ in $\sP_{j,m}$,
it is to be shown that
\begin{equation}\label{e:emp}
(e_m)^*p_*(W_j) \subset C_{m-1}
\end{equation}
for every $p$ in $\sP_{j,m}$.
We have $p = w_{j,m} \circ \Gamma_p$ with $\Gamma_p$ the graph of $p$, and
\[
(e_m)^* \circ (w_{j,m})_* = (w_{j,m-1})_* \circ (e_{j+m})^*.
\]
Thus \eqref{e:emp} will hold provided that $(e_{j+m})^*(\Gamma_p)_*(W_j) \subset C_{j+m-1}$.
Replacing $m$ by $j+m$ and $p$ by $\Gamma_p$, we may thus suppose that
$p$ has a left inverse in $\sP_{m,j}$.
In that case any $y$ in $W_j$ is of the form $p^*(x)$ with $x$ in $W_m$,
and then
\[
(e_m)^*p_*(y) = (e_m)^*(p_*(1).x) = (e_m)^*p_*(1).(e_m)^*(x).
\]
Thus \eqref{e:emp} will hold provided that $(e_m)^*p_*(1)$ lies in $C_{m-1}$.

To see that $(e_m)^*p_*(1)$ lies in $C_{m-1}$, note that $e_m$ has a left inverse
$f$ in $\sP_{m,m-1}$.
Then
\[
(e_m)^*p_*(1) = f_*(e_m)_*(e_m)^*p_*(1) = f_*(p_*(1).(e_m)_*(1)) = f_*p_*p^*(e_m)_*(1).
\]
Since $(e_m)_*(1) = (w_{m-2,2})^*(\Delta_X)_*(1)$, we reduce finally to showing that
$h^*(\Delta_X)_*(1)$ lies in $C_j$ for every $h$ in $\sP_{j,2}$.
Such an $h$ factors as a projection followed by either a symmetry of $X^2$ or $\Delta_X$,
so we may suppose that $j = 1$ and $h = \Delta_X$.
Then $h^*(\Delta_X)_*(1)$ is the top Chern class of $X$, which by hypothesis lies
in $Z_1 \subset C_1$.
\end{proof}

\section{Group representations}\label{s:grprep}

Let $k$ be a field.
By a $k$\nd linear category we mean a category equipped with a structure
of $k$\nd vector space on each hom-set such that the composition is $k$\nd bilinear.
A $k$\nd linear category is said to be pseudo-abelian if it has a zero object
and direct sums, and if every idempotent endomorphism has an image.
A \emph{$k$\nd tensor category} is a pseudo-abelian $k$\nd linear category $\sC$,
together with a structure of symmetric monoidal category on $\sC$ (\cite{Mac},~VII~7)
such that the tensor product $\otimes$ is $k$\nd bilinear on hom-spaces.
Thus $\sC$ is equipped with a unit object $\I$, and natural isomorphisms
\[
(L \otimes M) \otimes N \iso L \otimes (M \otimes N),
\]
the associativities,
\[
M \otimes N \iso N \otimes M,
\]
the symmetries, and $\I \otimes M \iso M$ and $M \otimes \I \iso M$, which satisfy appropriate
compatibilities.
We assume in what follows that $\I \otimes M \iso M$ and $M \otimes \I \iso M$ are
identities: this can be always arranged by replacing if necessary $\otimes$ by an isomorphic
functor.
Brackets in multiple tensor products will often be omitted when it is of no importance
which bracketing is chosen.
The tensor product of $r$ copies of $M$ will then be written as $M^{\otimes r}$,
and similarly for morphisms.
There is a canonical action $\tau \mapsto M^{\otimes \tau}$ of the symmetric group
$\mathfrak{S}_r$ on $M^{\otimes r}$,
defined using the symmetries.
It extends to a homomorphism of $k$\nd algebras from $k[\mathfrak{S}_r]$
to $\End(M^{\otimes r})$.
When $k$ is of characteristic $0$, the symmetrising idempotent in $k[\mathfrak{S}_r]$
maps to an idempotent endomorphism $e$ of $M^{\otimes r}$, and we define the $r$th
symmetric power $S^r M$ of $M$ as the image of $e$.
Similarly we define the $r$th exterior power $\bigwedge^r M$ of $M$ using the
antisymmetrising idempotent in $k[\mathfrak{S}_r]$

Let $G$ be a linear algebraic group over $k$.
We write $\REP(G)$ for the category of $G$\nd modules.
The tensor product $\otimes_k$ over $k$ defines on $\REP(G)$
a structure of $k$\nd tensor category.
Recall (\cite{Wat}, 3.3) that every $G$\nd module is the filtered colimit of its finite-dimensional
$G$\nd submodules.
If $E$ is a finite-dimensional $G$\nd module, then regarding $\REP(G)$
as a category of comodules (\cite{Wat}, 3.2) shows that $\Hom_G(E,-)$ preserves filtered
colimits.
When $k$ is algebraically closed, a $k$\nd vector subspace of a $G$\nd module is a
$G$\nd submodule provided it is stable under every $k$\nd point of $G$.
This is easily seen by reducing to the finite-dimensional case.

We suppose from now on that $k$ has characteristic $0$.
Let $\rho$ be a central $k$\nd point of $G$ with $\rho^2 = 1$.
Then $\rho$ induces a $\Z/2$\nd grading on $\REP(G)$, with the
$G$\nd modules pure of degree $i$ those on which $\rho$ acts as $(-1)^i$.
We define as follows a $k$\nd tensor category $\REP(G,\rho)$.
The underlying $k$\nd linear category, tensor product and associativities of $\REP(G,\rho)$
are the same as those of $\REP(G)$, but the symmetry  $M \otimes N \iso N \otimes M$ is given
by multiplying that in $\REP(G)$ by $(-1)^{ij}$ when $M$ is of degree $i$ and $N$
of degree $j$, and then extending by linearity.
When $\rho = 1$, the $k$\nd tensor categories $\REP(G)$ and $\REP(G,\rho)$ coincide.

An algebra in a $k$\nd tensor category is defined as an object $R$
together with a multiplication $R \otimes R \to R$ and unit $\I \to R$ satisfying
the usual associativity and identity conditions.
Since the symmetry is not used in this definition, an algebra in $\REP(G,\rho)$ is
the same as an algebra in $\REP(G)$, or equivalently a $G$\nd algebra.
An algebra $R$ in $\REP(G,\rho)$  will be said to be finitely generated if its underlying
$k$\nd algebra is.
It is equivalent to require that $R$ be generated as a $k$\nd algebra
by a finite-dimensional $G$\nd submodule.

A (left) module over an algebra $R$ is an object $N$ equipped with an action
$R \otimes N \to N$ satisfying the usual associativity and identity conditions.
If $R$ is an algebra in $\REP(G,\rho)$ or $\REP(G)$, we also speak of a $(G,R)$\nd module.
A $(G,R)$\nd module is said to be finitely
generated if it is finitely generated as a module
over the underlying $k$\nd algebra of $R$.
It is equivalent to require that it be generated as a module over the $k$\nd algebra $R$ by a
finite-dimensional $G$\nd submodule.

An algebra $R$ in a $k$\nd tensor category is said to be commutative if
composition with the symmetry interchanging the factors $R$ in $R \otimes R$
leaves the multiplication unchanged.
If $R$ is an algebra in $\REP(G,\rho)$,
this notion of commutativity does not in general coincide with that of the
underlying $k$\nd algebra, but it does
when $\rho$ acts as $1$ on $R$.

Coproducts exist in the category of commutative algebras in a $k$\nd tensor category:
the coproduct of $R$ and $R'$ is $R \otimes R'$ with multiplication the tensor
product of the multiplications of $R$ and $R'$ composed with the appropriate
symmetry.
To any map $[1,m] \to [1,l]$ and commutative algebra $R$ there is then associated a
morphism $R^{\otimes m} \to R^{\otimes l}$, defined using symmetries $R^{\otimes \tau}$ and
the unit and multiplication of $R$ and their tensor products and composites,
such that each component $R \to  R^{\otimes l}$ is the embedding into one of the factors.

Let $R$ be a commutative algebra in $\REP(G,\rho)$.
Then the symmetry in $\REP(G,\rho)$ defines on any $R$\nd module a canonical structure
of $(R,R)$\nd bimodule.
The category of $(G,R)$\nd modules
has a structure of $k$\nd tensor category, with the tensor product $N \otimes_R N'$
of $N$ and $N'$ defined in the usual way as the coequaliser of the two
morphisms
\[
N \otimes_k R \otimes_k N' \to N \otimes_k N'
\]
given by the actions of $R$ on $N$ and $N'$, and the tensor product $f \otimes_R f'$
of $f:M \to N$ and $f':M' \to N'$ as the unique morphism rendering the square
\[
\begin{CD}
M \otimes_R M' @>{f \otimes_R f'}>> N \otimes_R N' \\
@AAA                                      @AAA     \\
M \otimes_k M'  @>{f \otimes_k f'}>> N \otimes_k N'
\end{CD}
\]
commutative.

Let $P$ be an object in $\REP(G,\rho)$.
We write $P_R$ for the object $R \otimes_k P$ in the $k$\nd tensor category of $(G,R)$\nd modules.
A morphism of commutative algebras $R' \to R$ in $\REP(G,\rho)$ induces by tensoring
with $P$ a morphism of $R'$\nd modules $P_{R'} \to P_R$.
For each $l$ and $m$, extension of scalars along $R' \to R$
then gives a $k$\nd linear map
\begin{equation}\label{e:extscal}
\Hom_{G,R'}((P_{R'})^{\otimes m},(P_{R'})^{\otimes l}) \to
\Hom_{G,R}((P_R)^{\otimes m},(P_R)^{\otimes l})
\end{equation}
Explicitly, \eqref{e:extscal} sends $f'$ to the unique morphism of $(G,R)$\nd modules $f$
that renders the square
\[
\begin{CD}
(P_R)^{\otimes m} @>{f}>>  (P_R)^{\otimes l}  \\
@AAA                                               @AAA              \\
(P_{R'})^{\otimes m}    @>{f'}>>                    (P_{R'})^{\otimes l}
\end{CD}
\]
commutative, where the vertical arrows are those defined by $P_{R'} \to P_R$.
If $P$ is finite-dimensional, then for given commutative algebra $R$
and $f$,
there is a finitely generated $G$\nd subalgebra $R'$ of $R$ such that $f$ is
in the image of \eqref{e:extscal}.
This can be seen by writing $R$ as the filtered colimit $\colim_\lambda R_\lambda$
of its finitely generated $G$\nd subalgebras, and noting that since $P^{\otimes m}$ is finite-dimensional,
the composite of $P^{\otimes m} \to (P_R)^{\otimes m}$ with $f$ factors through some
$(P_{R_\lambda})^{\otimes l}$.

Suppose that $G$ is reductive, or equivalently that $\Hom_G(P,-)$ is exact for every
$G$\nd module $P$.
Then $\Hom_G(P,-)$ preserves colimits for $P$ finite-dimensional.
In particular $(-)^G = \Hom_G(k,-)$ preserves colimits.
If $R$ is a commutative algebra in $\REP(G,\rho)$ with $R^G = k$,
then $R$ has a unique maximal $G$\nd ideal.
Indeed $J^G = 0$ for $J \ne R$ a $G$\nd ideal of $R$,
while $(J_1)^G = 0$ and $(J_2)^G = 0$ implies $(J_1 + J_2)^G = 0$.

\begin{lem}\label{l:repfin}
Let $G$ be a reductive group over a field $k$ of characteristic $0$ and $\rho$ be a
central $k$\nd point of $G$ with $\rho^2 = 1$.
Let $R$ be a finitely generated
commutative algebra in $\REP(G,\rho)$ with $R^G = k$, and $N$ be a
finitely generated $R$\nd module.
\begin{enumerate}
\item\label{i:algfin}
The $k$\nd vector space $N^G$ is finite-dimensional.
\item\label{i:idealcompl}
For every $G$\nd ideal $J \ne R$ of $R$,
we have
$(J^rN)^G = 0$ for $r$ large.
\end{enumerate}
\end{lem}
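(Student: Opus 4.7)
The plan is to deduce both parts from the classical finiteness theorem of reductive invariant theory: if $G$ is reductive over a characteristic-zero field, $R'$ is an ordinary finitely generated commutative $G$\nd algebra with $(R')^G = k$, and $N'$ is a finitely generated $R'$\nd module with compatible $G$\nd action, then $(N')^G$ is finite-dimensional. The main technical step is the first reduction; once it is in place, the Rees algebra trick for \ref{i:idealcompl} is essentially formal.

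For \ref{i:algfin}, I use the supercommutative structure of $R$ in $\REP(G,\rho)$ to make $R$ module-finite over its even part $R^+$, so that the classical theorem applies. Choose a finite-dimensional $\rho$\nd graded $G$\nd submodule $V = V^+ \oplus V^-$ generating $R$ as an algebra. By supercommutativity the multiplication $(V^-)^{\otimes n} \to R$ factors through $\bigwedge^n V^-$, so any product of more than $\dim V^-$ elements of $V^-$ vanishes. Hence $R^+$ is the ordinary finitely generated commutative $G$\nd algebra generated by $V^+$ and the finite-dimensional subspace $V^- \cdot V^-$, and $R^- = R^+ \cdot V^-$, so $R$ is finite as an $R^+$\nd module. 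Since $\rho$ acts as $\pm 1$ on $R^\pm$, every $G$\nd invariant of $R$ lies in $R^+$; thus $(R^+)^G = R^G = k$. The $R$\nd module $N$ is accordingly finitely generated over $R^+$, and the classical theorem gives \ref{i:algfin}.

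For \ref{i:idealcompl}, I apply \ref{i:algfin} to a Rees-type algebra. Since $R$ is finite over the Noetherian ring $R^+$, it is itself Noetherian, so the ideal $J$ is finitely generated as an $R$\nd module; taking the $G$\nd submodule spanned by finitely many generators, and using that every $G$\nd module is the filtered union of its finite-dimensional $G$\nd submodules, produces a finite-dimensional $G$\nd submodule $W \subseteq J$ with $J = R \cdot W$. Then $S = \bigoplus_{r \ge 0} J^r$ is a commutative algebra in $\REP(G,\rho)$, generated as such by the finite-dimensional generators of $R$ placed in degree $0$ together with $W$ placed in degree $1$, and $M = \bigoplus_{r \ge 0} J^r N$ is a finitely generated $S$\nd module, since a set of $R$\nd generators of $N$ also generates $M$ over $S$. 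For each $r \ge 1$, the subspace $(J^r)^G$ lies in the one-dimensional space $R^G = k$, so equals $0$ or $k$; the latter would yield $1 \in J^r \subseteq J$ and hence $J = R$, contradicting the hypothesis. Therefore $S^G = k$, and \ref{i:algfin} applied to $(S,M)$ shows $M^G = \bigoplus_{r \ge 0} (J^r N)^G$ is finite-dimensional, which forces $(J^r N)^G = 0$ for $r$ large.
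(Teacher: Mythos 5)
Your proof is correct. Part \ref{i:algfin} follows essentially the same route as the paper: decompose $R$ into its even and odd parts under $\rho$, observe that the odd part is nilpotent and that $R$ is module-finite over the even part $R^+$ (a finitely generated ordinary commutative $G$\nd algebra with $(R^+)^G = R^G = k$), and invoke the classical finiteness theorem for invariants of reductive groups. For part \ref{i:idealcompl}, however, you take a genuinely different and arguably cleaner path. The paper also reduces to the even part (using $J_1{}^r = 0$ to get $J^rN \subset J_0{}^{r-s}N$), but then passes to an algebraically closed field and proves the stronger statement $\bigcap_r J^r N = 0$ via the Krull intersection theorem: it must check $J + \mathfrak{p} \ne R$ for each associated prime $\mathfrak{p}$ of $N$, which it does by intersecting the $G(k)$\nd orbit of $\mathfrak{p}$ to produce a $G$\nd ideal contained in the unique maximal one. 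Your Rees-algebra argument ($S = \bigoplus_r J^r$, $M = \bigoplus_r J^r N$, with $S^G = k$ because $(J^r)^G \subseteq J^r \cap k$ must vanish for $r \ge 1$) bypasses associated primes, base change to $\bar k$, and the orbit argument entirely, deducing \ref{i:idealcompl} formally from \ref{i:algfin}; the only inputs are Noetherianity of $R$ (to find the finite-dimensional $G$\nd submodule $W$ with $J = RW$, so that $S$ is finitely generated) and the fact that $G$\nd modules are filtered unions of finite-dimensional submodules, both of which are available in the paper's setup. The trade-off is that the paper's route yields the intermediate fact $\bigcap_r J^r N = 0$, which is slightly stronger than the vanishing of invariants, whereas yours gives exactly the stated conclusion with less machinery.
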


\begin{proof}
Every object $P$ of $\REP(G,\rho)$ decomposes as $P_0 \oplus P_1$
where $\rho$ acts as $(-1)^i$ on $P_i$.
In particular $R = R_0 \oplus R_1$ with $R_0$ a $G$\nd subalgebra of $R$.
Suppose that $R$ is generated as an algebra by the finite-dimensional $G$\nd submodule $M$.
Then $R_0$ is generated as an algebra by $M_0 + M_1{}\!^2$, and hence is finitely generated.
Since $R$ is a commutative algebra in $\REP(G,\rho)$, it is generated as an $R_0$\nd module
by $M_1$.
Hence any finitely generated $R$\nd module is finitely generated
as an $R_0$\nd module.

To prove \ref{i:algfin}, we reduce after replacing $R$ by $R_0$
to the case where $R = R_0$.
Then $R$ is a commutative $G$\nd algebra in the usual sense.
In this case it is well known that $N^G$ is finite-dimensional over $k = R^G$
(e.g \cite{ShaAlgIV}, II~Theorem~3.25).

To prove \ref{i:idealcompl}, note that $J_0 \ne R_0$ is an ideal of $R_0$.
Since $R$ is a finitely generated $R_0$\nd module,
so also is $R_1$.
If $x_1, x_2, \dots ,x_s$ generate $R_1$ over $R_0$, then since each $x_i$ has square $0$
we have $R_1{}\!^r = 0$ and hence $J_1{}\!^r = 0$ for $r>s$.
Thus for $r > s$ we have
\[
J^r N
= (J_0 + J_1)^r N
= J_0{}\!^r N + J_0{}\!^{r-1}J_1 N + \dots + J_0{}\!^{r-s}J_1{}\!^s N
\subset J_0{}\!^{r-s} N.
\]
Replacing $R$ by $R_0$ and $J$ by $J_0$,
we thus reduce again to the case where $R = R_0$ is a commutative $G$\nd algebra
in the usual sense.
We may suppose further that $k$ is algebraically closed.

By \ref{i:algfin}, it is enough to show that $\bigcap_{r=0}^\infty J^r N = 0$,
or equivalently
(\cite{BAC-1} III \S 3 No.~2 Proposition~5 and IV \S 1 No.~1 Proposition~2, Corollaire~2)
that $J + \mathfrak{p} \ne R$ for every associated prime
$\mathfrak{p}$ of $N$.
Fix such a $\mathfrak{p}$, and consider the intersection $\mathfrak{p}'$
of the $g\mathfrak{p}$ for $g$ in $G(k)$.
It is stable under $G(k)$, and hence since $k$ is algebraically closed is a $G$\nd ideal of $R$.
Thus $J +  \mathfrak{p}' \ne R$,
because $J$ and $\mathfrak{p}'$ are contained in the unique maximal $G$\nd ideal of $R$.
Since each $g\mathfrak{p}$ lies in the finite set of associated primes of $N$,
it follows that $J + g\mathfrak{p} \ne R$ for some $g$ in $G(k)$.
Applying $g^{-1}$ then shows that $J + \mathfrak{p} \ne R$.
\end{proof}

Let $l_0$ and $l_1$ be integers $\ge 0$.
Write
\begin{equation}\label{e:Gdef}
G = GL_{l_0} \times_k GL_{l_1},
\end{equation}
$E_i$ for the standard representation of $GL_{l_i}$, regarded as a $G$\nd module, and
\begin{equation}\label{e:Edef}
E = E_0 \oplus E_1.
\end{equation}
We may identify the endomorphism of $E$ that sends $E_i$ to itself and acts on it as $(-1)^i$
with a central $k$\nd point $\rho$ of $G$ with $\rho^2 = 1$.

Consider the semidirect product
\begin{equation}\label{e:semidir}
\Gamma_r = (\Z/2)^r \rtimes \mathfrak{S}_r,
\end{equation}
where the symmetric group $\mathfrak{S}_r$ acts on $(\Z/2)^r$ through its action
on $[1,r]$.
For each $r$, the group $\Gamma_r$ acts on $E^{\otimes r}$,
with the action of
$(\Z/2)^r$ the tensor product of the actions $i \mapsto \rho^i$ of $\Z/2$ on $E$,
and the action of $\mathfrak{S}_r$ that defined by the
symmetries in $\REP(G,\rho)$.
Thus we obtain a homomorphism
\begin{equation}\label{e:semidirhom}
k[\Gamma_r] \to \End_G(E^{\otimes r})
\end{equation}
of $k$\nd algebras.

For $r \le r'$ we may regard $\Gamma_r$ as a subgroup of $\Gamma_{r'}$, and hence
$k[\Gamma_r]$ as a $k$\nd subalgebra of $k[\Gamma_{r'}]$, by identifying
$(\Z/2)^r$ with the subgroup of $(\Z/2)^{r'}$ with the last $r'-r$ factors the identity
and $\mathfrak{S}_r$ with the subgroup of $\mathfrak{S}_{r'}$ which leaves
the last $r'-r$ elements of $[1,r']$ fixed.
Write $e_0$ for the idempotent of $k[\Z/2]$ given by half the sum of the two elements
of $\Z/2$, and $e_1$ for $1-e_0$.
Given $\pi = (\pi_1,\pi_2,\dots,\pi_r)$ in $(\Z/2)^r$, we then have an idempotent
\[
e_\pi = e_{\pi_1} \otimes e_{\pi_2} \otimes \dots \otimes e_{\pi_r}
\]
in $k[\Z/2]^{\otimes r} = k[(\Z/2)^r] \subset k[\Gamma_r]$.
When every component of $\pi$ is $i \in \Z/2$,
we write $e_{i,r}$ for $e_\pi$.
We also write $a_{0,r}$ for the antisymmetrising idempotent and
$a_{1,r}$ for the symmetrising idempotent in $k[\mathfrak{S}_r]$, and for $i \in \Z/2$ we write
\begin{equation}\label{e:xir}
x_{i,r} = e_{i,l_i +1} a_{i,l_i +1} = e_{i,l_i +1} a_{i,l_i +1} e_{i,l_i +1}
= a_{i,l_i +1} e_{i,l_i +1} \in  k[\Gamma_{l_i +1}] \subset k[\Gamma_r]
\end{equation}
if  $r > l_i$ and $x_{i,r} = 0$ otherwise.

\begin{lem}\label{l:GL}
\begin{enumerate}
\item\label{i:hom0}
If $r \ne r'$ then $\Hom_G(E^{\otimes r},E^{\otimes r'}) = 0$.
\item\label{i:homsurj}
The homomorphism \eqref{e:semidirhom} is surjective, with kernel the ideal of $k[\Gamma_r]$
generated by $x_{0,r}$ and $x_{1,r}$.
\end{enumerate}
\end{lem}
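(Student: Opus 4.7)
The plan is to decompose $E^{\otimes r}$ via the idempotents $e_\pi$ for $\pi \in (\Z/2)^r$, and reduce the lemma to classical Schur--Weyl duality for $GL_{l_0}$ and $GL_{l_1}$ separately. Write $E_\pi = E_{\pi_1} \otimes \dots \otimes E_{\pi_r}$: then $e_\pi$ projects $E^{\otimes r}$ onto $E_\pi$, and as a $G$\nd module $E_\pi$ is isomorphic to $E_0^{\otimes a(\pi)} \otimes E_1^{\otimes b(\pi)}$, with $a(\pi) = \#\{j : \pi_j = 0\}$ and $b(\pi) = r - a(\pi)$. The central torus of $G$ acts on $E_\pi$ with weight $(a(\pi), b(\pi))$, so $\Hom_G(E_\pi, E_{\pi'}) = 0$ unless $(a(\pi), b(\pi)) = (a(\pi'), b(\pi'))$; since $a(\pi) + b(\pi) = r$, this yields \ref{i:hom0} whenever $r \ne r'$.

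For \ref{i:homsurj}, the $\mathfrak{S}_r$\nd orbits in $(\Z/2)^r$ are parametrised by pairs $(a, b)$ with $a + b = r$, and the block idempotents $E_{a,b} := \sum_{\pi \text{ in orbit}} e_\pi$ are central and pairwise orthogonal in $k[\Gamma_r]$. Both sides of \eqref{e:semidirhom} split into $(a,b)$\nd blocks, so it suffices to analyse each one. Fix $\pi_0 = (0,\dots,0,1,\dots,1)$ with $a$ zeros, so the stabiliser of $\pi_0$ in $\mathfrak{S}_r$ is $\mathfrak{S}_a \times \mathfrak{S}_b$. The subalgebra $e_{\pi_0} k[\Gamma_r] e_{\pi_0}$ is identified with $k[\mathfrak{S}_a \times \mathfrak{S}_b]$ via $e_{\pi_0} \sigma \mapsto \sigma$, and $e_{\pi_0}$ is a full idempotent of $E_{a,b} k[\Gamma_r]$ because every $e_\pi$ in the orbit is of the form $\sigma e_{\pi_0} \sigma^{-1}$. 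The resulting Morita equivalence identifies the block map with
\[
k[\mathfrak{S}_a \times \mathfrak{S}_b] \to \End_{GL_{l_0}}(E_0^{\otimes a}) \otimes_k \End_{GL_{l_1}}(E_1^{\otimes b}),
\]
in which the $\mathfrak{S}_b$\nd factor acts on $E_1^{\otimes b}$ with a sign twist because $\rho$ acts as $-1$ on $E_1$. Schur--Weyl duality on each tensor factor then shows this map is surjective (hence so is \eqref{e:semidirhom}), and identifies its kernel as the two-sided ideal of $k[\mathfrak{S}_a \times \mathfrak{S}_b]$ generated by the antisymmetriser on any $l_0{+}1$ of the zero positions when $a > l_0$, and --- since the sign twist interchanges symmetriser and antisymmetriser in $k[\mathfrak{S}_b]$ --- by the symmetriser on any $l_1{+}1$ of the one positions when $b > l_1$.

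Lifting through Morita produces, for each block, elements $e_{\pi_0} \cdot a_{0,l_0+1}$ and (after conjugation by a permutation sending a chosen $(l_1{+}1)$\nd subset of the one positions to $\{1,\dots,l_1+1\}$) $e_{\pi_0} \cdot a_{1,l_1+1}$ of $k[\Gamma_r]$. Since $\pi_0$ has $0$'s in its first $l_0+1$ coordinates, and (after conjugation) $1$'s in its first $l_1+1$ coordinates, $e_{\pi_0}$ factors as $e_{0,l_0+1}$ (respectively $e_{1,l_1+1}$) times an idempotent of $k[(\Z/2)^r]$ supported on the remaining coordinates; that idempotent commutes with $a_{0,l_0+1}$ (respectively $a_{1,l_1+1}$) as they act on disjoint coordinates, so each lifted generator is a product of $x_{0,r}$ or $x_{1,r}$ with a commuting idempotent and a conjugating permutation, hence lies in the two-sided ideal $(x_{0,r}, x_{1,r})$. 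Summing the blocks gives $\ker \eqref{e:semidirhom} \subset (x_{0,r}, x_{1,r})$; the opposite containment is immediate, since $x_{0,r}$ acts through $\bigwedge^{l_0+1} E_0 = 0$, and $x_{1,r}$ acts --- once the sign twist turns the symmetriser into a naive antisymmetriser on $E_1^{\otimes (l_1+1)}$ --- through $\bigwedge^{l_1+1} E_1 = 0$. The technical obstacle is precisely this final bookkeeping: tracking the $\REP(G,\rho)$ sign conventions together with the coordinate indexing so that the Morita\nd lifted kernel generators match the prescribed $x_{0,r}, x_{1,r}$.
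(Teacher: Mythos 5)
Your proof is correct and follows essentially the same route as the paper's: decompose $E^{\otimes r}$ via the idempotents $e_\pi$, reduce to a single representative $\pi_0$ in each $\mathfrak{S}_r$\nd orbit (the paper does this by conjugating with symmetries, which is exactly the content of your Morita step with the full idempotent $e_{\pi_0}$), identify the corner with $k[\mathfrak{S}_a\times\mathfrak{S}_b]$ acting on $E_0^{\otimes a}\otimes E_1^{\otimes b}$, and invoke classical Schur--Weyl duality with the sign twist on the $E_1$\nd factor converting the antisymmetriser into the symmetriser. The final bookkeeping identifying the lifted kernel generators with the ideal $(x_{0,r},x_{1,r})$ is also carried out correctly.
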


\begin{proof}
\ref{i:hom0}
The action of $G$ on $E$ restricts along the appropriate $\bG_m \to G$ to
the homothetic action of $\bG_m$ on $E$.

\ref{i:homsurj}
Write $I$ for the ideal of
$k[\Gamma_r]$ generated by $x_{0,r}$ and $x_{1,r}$.
The image of $e_\pi$ under \eqref{e:semidirhom} is
the projection onto the direct summand
\[
E_\pi  = E_{\pi_1} \otimes_k E_{\pi_2} \otimes_k \dots \otimes_k E_{\pi_r}
\]
of $E^{\otimes r}$.
The $e_\pi$ give a decomposition of the identity of $k[\Gamma_r]$ into orthogonal idempotents,
and \eqref{e:semidirhom} is the direct sum over $\pi$ and $\pi'$ of the homomorphisms
\begin{equation}\label{e:semidirhompi}
e_{\pi'} k[\Gamma_r] e_\pi \to \Hom_G(E_\pi,E_{\pi'})
\end{equation}
it induces on direct summands of $k[\Gamma_r]$ and $\End_G(E^{\otimes r})$.
It is thus enough to show that \eqref{e:semidirhompi} is surjective, with
kernel $e_{\pi'} I e_\pi$.

Restricting to the centre
of $G$ shows that the target of \eqref{e:semidirhompi} is $0$ unless
$\pi'$ and $\pi$ have the same number of components $0$ or $1$,
or equivalently $\pi' = \tau \pi \tau^{-1}$ for some $\tau \in \mathfrak{S}_r$.
The same holds for the source of \eqref{e:semidirhompi}, because
\[
\tau e_\pi \tau^{-1} = e_{\tau \pi \tau^{-1}}
\]
for every $\tau$ and $\pi$.
Since further the image of $\tau \in \mathfrak{S}_r$ under \eqref{e:semidirhom} induces an
isomorphism from $E_\pi$ to $E_{\tau \pi \tau^{-1}}$,
to show that \eqref{e:semidirhompi} has the required properties we may suppose
that $\pi' = \pi$ and that $r = r_0 + r_1$ where the first $r_0$ components of
$\pi$ are $0$ and the last $r_1$ are $1$.
Then the source of \eqref{e:semidirhompi} has a basis
$e_\pi \tau e_\pi = e_\pi \tau$
with $\tau$ in the subgroup $\mathfrak{S}_{r_0} \times \mathfrak{S}_{r_1}$ of
$\mathfrak{S}_r$ that permutes the first $r_0$ and last $r_1$ elements of $[1,r]$
among themselves.
Thus we may identify
\[
k[\mathfrak{S}_{r_0}] \otimes_k k[\mathfrak{S}_{r_1}]
= k[\mathfrak{S}_{r_0} \times \mathfrak{S}_{r_1}]
\]
with the (non-unitary) $k$\nd subalgebra  $e_\pi k[\Gamma_r] e_\pi$ of $k[\Gamma_r]$.
Similarly we may identify
\[
\End_G(E_0{}\!^{\otimes r_0}) \otimes_k \End_G(E_1{}\!^{\otimes r_1})
= \End_G(E_0{}\!^{\otimes r_0} \otimes_k E_1{}\!^{\otimes r_1})
\]
with the (non-unitary) $k$\nd subalgebra $\End_G(E_\pi)$ of $\End_G(E)$.
Now given $\tau$ and $\tau'$ in $\mathfrak{S}_r$, the element $e_\pi \tau' x_{i,r} \tau^{-1} e_\pi$
is $0$ unless both $\tau$ and $\tau'$ send $[1,l_i + 1]$ into $[1,r_0]$ if $i = 0$
or into $[r_0+1,r]$ if $i = 1$.
With the above identifications, $e_\pi I e_\pi$ is thus the ideal of
$k[\mathfrak{S}_{r_0}] \otimes_k k[\mathfrak{S}_{r_1}]$ generated by $y_0 \otimes 1$
and $1 \otimes y_1$, where $y_i$ is $a_{i,l_i+1}$ in
$k[\mathfrak{S}_{l_i +1}] \subset k[\mathfrak{S}_{r_i}]$
if $r_i > l_i$ and $y_i = 0$ otherwise.
Further \eqref{e:semidirhompi} is the tensor product of
the homomorphisms
\begin{equation}\label{e:symhomi}
k[\mathfrak{S}_{r_i}] \to \End_G(E_i{}\!^{\otimes r_i})
\end{equation}
of $k$\nd algebras sending $\tau \in \mathfrak{S}_{r_i}$ to
$E_i{}\!^{\otimes \tau}$ in $\REP(G,\rho)$.
It will thus suffice to prove that \eqref{e:symhomi} is surjective with kernel
generated by $y_i$.
If $i = 0$, \eqref{e:symhomi} may be identified with the homomorphism defined
by the action of $\mathfrak{S}_{r_i}$ by symmetries on the $r_i$th tensor
power in $\REP(GL_{l_i})$ of the standard representation of $GL_{l_i}$,
while if $i = 1$, the composite of the automorphism
$\tau \mapsto \mathrm{sgn}(\tau) \tau$
of $k[\mathfrak{S}_{r_i}]$ with \eqref{e:symhomi} may be so identified.
The required result is thus classical (e.g. \cite{FulHar}, Theorem~6.3).
\end{proof}

\section{Duals}\label{s:dual}

Let $\sC$ be a $k$\nd tensor category.
By a duality pairing in $\sC$ we mean a quadruple $(L,L^\vee,\eta,\varepsilon)$
consisting of objects $L$ and $L^\vee$ of $\sC$ and morphisms
$\eta:\I \to L^\vee \otimes L$, the unit, and
$\varepsilon:L \otimes L^\vee \to \I$, the counit,
satisfying triangular identities analogous for those of an adjunction
(\cite{Mac}, p.~85).
Explicitly, it is required that, modulo associativities,
the composite of $L \otimes \eta$ with $\varepsilon \otimes L$ should be $1_L$, and of
$\eta \otimes L^\vee$ with $L^\vee \otimes \varepsilon$ should be $1_{L^\vee}$.
When such an $(L,L^\vee,\eta,\varepsilon)$ exists for a given $L$, it is said to be a duality pairing
for $L$, and $L$ is said to be dualisable, and $L^\vee$ to be dual to $L$.
We then have a dual pairing $(L^\vee,L,\widetilde{\eta},\widetilde{\varepsilon})$
for $L^\vee$, with $\widetilde{\eta}$ and $\widetilde{\varepsilon}$
obtained from $\eta$ and $\varepsilon$ by composing with the appropriate symmetries.

In verifying the properties of duals recalled below, it is useful to reduce to the
case where $\sC$ is strict, i.e.\ where all
associativities of $\sC$ are identities.
This can be done by taking (see \cite{Mac},~XI~3, Theorem~1) a $k$\nd linear 
strong symmetric monoidal functor (\cite{Mac},~XI~2) $\sC \to \sC'$ giving an equivalence
to a strict $k$\nd tensor category $\sC'$.

Suppose given duality pairings $(L,L^\vee,\eta,\varepsilon)$ for $L$
and $(L',L'{}^\vee,\eta',\varepsilon')$ for $L'$.
Then we have a tensor product duality pairing for $L \otimes L'$, with dual
$L^\vee \otimes L'{}^\vee$, and unit and counit
obtained from $\eta \otimes \eta'$ and $\varepsilon \otimes \varepsilon'$ by
composing with the appropriate symmetries.
Further any morphism $f:L \to L'$ has a transpose $f^\vee:L'{}^\vee \to L^\vee$,
characterised by the condition
\[
\varepsilon \circ (L \otimes f^\vee) = \varepsilon' \circ (f \otimes L'{}^\vee),
\]
or by a similar condition using $\eta$ and $\eta'$.
Explicitly, $f^\vee$ is given modulo associativities by the composite
of $\eta \otimes L'{}^\vee$ with $L^\vee \otimes f \otimes L'{}^\vee$ and
$L^\vee \otimes \varepsilon'$.
We have $(1_L)^\vee = 1_{L^\vee}$ and $(f' \circ f)^\vee = f^\vee \circ f'{}^\vee$,
and, with the transpose of $f^\vee$ taken using the dual pairing,
we have $f^{\vee \vee} = f$.
In particular taking $L = L'$ shows that a duality pairing for $L$ is unique up
to unique isomorphism.

Let $L$ be a dualisable object of $\sC$.
Then we have a $k$\nd linear map
\[
\tr_L:\Hom_\sC(N \otimes L,N' \otimes L) \iso \Hom_\sC(N,N'),
\]
natural in $N$ and $N'$,
which sends $f$ to its contraction $\tr_L(f)$ with respect to $L$,
defined as follows.
Modulo associativities, $\tr_L(f)$
is the composite of $N \otimes \widetilde{\eta}$
with $f \otimes L^\vee$ and $N' \otimes \varepsilon$, with $L^\vee$ and $\varepsilon$
as above
and $\widetilde{\eta}$ the composite of $\eta$ with the symmetry interchanging
$L^\vee$ and $L$.
It does not depend on the choice of duality pairing for $L$.
When $N = N' = \I$, the contraction $\tr_L(f)$ is the trace $\tr(f)$ of
the endomorphism $f$ of $L$.
The rank of $L$ is defined as $\tr(1_L)$.
Modulo associativities, $\tr_{L \otimes L'}$ is given by
successive contraction with respect to $L'$ and $L$, and $\tr_L$ commutes
with $M \otimes -$.
By the appropriate triangular identity for $L$ we have
\begin{equation}\label{e:gcomp}
g'' \circ g' = \tr_L((g'' \otimes g') \circ \sigma)
\end{equation}
for $g':M' \to L$ and $g'':L \to M''$,
with $\sigma$ the symmetry interchanging $M'$ and $L$.

Let $L$ be a dualisable object of $\sC$, and let $\tau$ be a permutation of $[1,r+1]$
and $f_1,f_2, \dots ,f_{r+1}$ be endomorphisms of $L$.
Write $\tau'$ for the permutation of $[1,r]$ obtained by omitting $r+1$ from the cycle
of $\tau$ containing it,
and define endomorphisms $c$ of $\I$ and $f'{}\!_1,f'{}\!_2, \dots ,f'{}\!_r$ of $L$ as follows.
If $\tau$ leaves $r+1$ fixed, then $c = \tr(f_{r+1})$ and $f'{}\!_i = f_i$ for $i\le r$.
If $\tau$ sends $r+1$ to $i_0 \le r$, then $c = 1$, and $f'{}\!_i$ for $i\le r$ is  $f_i$
when $i \ne i_0$ and $f_{i_0} \circ f_{r+1}$ when $i = i_0$.
We then have
\begin{equation}\label{e:symcontr}
\tr_L((f_1 \otimes f_2 \otimes \dots \otimes f_{r+1}) \circ L^{\otimes \tau}) =
c ((f'{}\!_1 \otimes f'{}\!_2 \otimes \dots \otimes f'{}\!_r) \circ L^{\otimes \tau'}).
\end{equation}
To see this, reduce to the case where $\tau$ leaves all but the last two elements
of $[1,r+1]$ fixed, by composing on the left and right with appropriate morphisms
$L^{\otimes \tau_0} \otimes L$ with $\tau_0$ a permutation of $[1,r]$.

Let $L$, $L'$, $M$ and $M'$ be objects in $\sC$, and $(L,L^\vee,\eta,\varepsilon)$
and $(L',L'{}^\vee,\eta',\varepsilon')$ be duality pairings for $L$ and $L'$.
Then we have a canonical isomorphism
\[
\Hom_\sC(M,M' \otimes L) \iso \Hom_\sC(M \otimes L^\vee,M')
\]
which modulo associativities sends $f:M \to M'\otimes L$ to the composite of
$f \otimes L^\vee$ with $M' \otimes \varepsilon$.
Its inverse is defined using $\eta$.
We also have a canonical isomorphism
\[
\Hom_\sC(L' \otimes M,M') \iso \Hom_\sC(M,L'{}^\vee \otimes M')
\]
defined using $\eta'$.
Replacing $M$ by $L' \otimes M$ in the first of these isomorphisms
and by $M \otimes L^\vee$ in the second,
and using the symmetries interchanging $M$ and $L'$ and $L'{}^\vee$ and $M'$,
then gives a canonical isomorphism
\[
\delta_{M,L;M',L'}:
\Hom_\sC(M \otimes L',M' \otimes L) \iso
\Hom_\sC(M \otimes L^\vee,M' \otimes L'{}^\vee).
\]
Modulo associativities, $\delta_{M,L;M',L'}$ sends $f$ to the composite of
$M \otimes \widetilde{\eta}' \otimes L^\vee$,
the tensor product of $f$ with the symmetry interchanging $L'{}^\vee$ and $L^\vee$,
and $M' \otimes \varepsilon \otimes L'{}^\vee$, where $\widetilde{\eta}'$
is $\eta'$ composed with the symmetry interchanging $L'{}^\vee$ and $L'$.

With the transpose taken using the chosen duality pairings for $L$ and $L'$, we have
\begin{equation}\label{e:deltahg}
\delta_{M,L;M',L'}(h \otimes g) = h \otimes g^\vee.
\end{equation}
With the duality pairing $(\I,\I,1_\I,1_\I)$ for $\I$, we have
\begin{equation}\label{e:deltaf}
\delta_{M,L;\I,\I}(f) = \varepsilon \circ (f \otimes L^\vee).
\end{equation}
With the tensor product duality pairings for $L_1 \otimes L_2$ and $L_1{}\!' \otimes L_2{}\!'$,
we have
\begin{multline}\label{e:deltatens}
\sigma''' \circ
(\delta_{M_1,L_1;M_1{}\!',L_1{}\!'}(f_1) \otimes \delta_{M_2,L_2;M_2{}\!',L_2{}\!'}(f_2))
\circ \sigma''
=  \\
= \delta_{M_1 \otimes M_2,L_1 \otimes L_2;M_1{}\!' \otimes M_2{}\!',L_1{}\!' \otimes L_2{}\!'}
(\sigma' \circ (f_1 \otimes f_2) \circ \sigma)
\end{multline}
where each of $\sigma$, $\sigma'$, $\sigma''$ and $\sigma'''$
is a symmetry interchanging the middle two factors in a tensor
product $(- \otimes -) \otimes (- \otimes -)$.

If $M'$ is dualisable, we have
\begin{equation}\label{e:deltaMcomp}
\delta_{M',L';M'',L''}(f') \circ \delta_{M,L;M',L'}(f) =
\delta_{M,L;M'',L''}(\tr_{M' \otimes L'}
(\sigma_2 \circ (f' \otimes f) \circ \sigma_1))
\end{equation}
for $\sigma_1$ the symmetry interchanging $M$ and $M'$ and $\sigma_2$ the symmetry
interchanging $L'$ and $L$.
This can be seen by showing
that modulo associativities both sides of
\eqref{e:deltaMcomp} coincide with a morphism obtained from
\[
f' \otimes f \otimes L^\vee \otimes L''{}^\vee \otimes M'{}^\vee \otimes L'{}^\vee
\]
as follows: compose on the left and right with appropriate symmetries, then on the left
with the tensor product of $M'' \otimes L''{}^\vee$ and the counits for $L$, $L'$ and $M'$
and on the right with the tensor product of $M \otimes L^\vee$ with the units
for $L''$, $L'$ and $M'$.
To show this in the case of the left hand side of \eqref{e:deltaMcomp},
write it as a contraction
with respect to $M' \otimes L'{}^\vee$ using \eqref{e:gcomp}
and contract first with respect to $L'{}^\vee$, using the triangular identity.

With the duality pairing $(\I,\I,1_\I,1_\I)$ for $\I$ and the tensor product
duality pairing for $L \otimes N$,
we have
\begin{equation}\label{e:deltaNcomp}
\delta_{M,N;\I,\I}(g \circ f) = (\delta_{M,L;\I,\I}(f) \otimes \delta_{L,N;\I,\I}(g))
\circ \sigma \circ  \delta_{M,N;M \otimes L,L \otimes N}(\alpha),
\end{equation}
with $\alpha:M \otimes (L \otimes N) \iso (M \otimes L) \otimes N$ the associativity
and $\sigma$ the symmetry interchanging $L$ and $L^\vee$ in the tensor product of
$M \otimes L$ and $L^\vee \otimes N^\vee$.
Indeed modulo associativities $\sigma \circ  \delta_{M,N;M \otimes L,L \otimes N}(\alpha)$
is $1_M \otimes \eta \otimes 1_{N^\vee}$ by the
triangular identity for $N$, and \eqref{e:deltaNcomp} then follows by the triangular
identity for $L$.

Let $(L,L^\vee,\eta,\varepsilon)$ be a duality pairing for
the object $L$ of $\sC$.
Its $r$th tensor power $(L^{\otimes r},(L^\vee)^{\otimes r},\eta_r,\varepsilon_r)$
is a duality pairing for $L^{\otimes r}$.
We write
\[
L^{r,s} = L^{\otimes r} \otimes (L^\vee)^{\otimes s}.
\]
Then $L^{r,0} = L^{\otimes r}$.
We define a $k$\nd bilinear product $\widetilde{\otimes}$ on morphisms between the $L^{r,s}$
by requiring that the square
\begin{equation}\label{e:tildedef}
\begin{CD}
L^{r_1,s_1} \otimes L^{r_2,s_2}       @>{\sim}>>     L^{r_1+r_2,s_1+s_2} \\
@V{f_1 \otimes f_2}VV        @VV{f_1 \mathbin{\widetilde{\otimes}} f_2}V \\
L^{r_1{}\!',s_1{}\!'} \otimes L^{r_2{}\!',s_2{}\!'} @>{\sim}>>
                                   L^{r_1{}\!'+r_2{}\!',s_1{}\!'+s_2{}\!'}
\end{CD}
\end{equation}
commute, with the top isomorphism the symmetry interchanging the two factors
$(L^\vee)^{\otimes s_1}$ and $L^{\otimes r_2}$ and the bottom
that interchanging $(L^\vee)^{\otimes s'{}\!_1}$ and $L^{\otimes r'{}\!_2}$.
Then $\widetilde{\otimes}$ preserves composites, is associative, and we have
\begin{equation}\label{e:tildecom}
f_2 \mathbin{\widetilde{\otimes}} f_1 =
\sigma' \circ (f_1 \mathbin{\widetilde{\otimes}} f_2) \circ \sigma^{-1},
\end{equation}
where $\sigma$ interchanges the first $r_1$ with the last $r_2$ factors $L$
and the first $s_1$ with the last $s_2$ factors $L^\vee$ of $ L^{r_1+r_2,s_1+s_2}$,
and similarly for $\sigma'$.
We define an isomorphism
\begin{equation}\label{e:deltaLdef}
\delta_{L;r,s;r',s'}:
\Hom_\sC(L^{\otimes (r+s')},L^{\otimes (r'+s)}) \iso \Hom_\sC(L^{r,s},L^{r',s'})
\end{equation}
by taking $L^{\otimes r},L^{\otimes s},L^{\otimes r'},L^{\otimes s'}$
for $M,L,M',L'$ in $\delta_{M,L;M',L'}$.

It follows from \eqref{e:deltahg} that
\begin{equation}\label{e:deltaLhg}
\delta_{L;r,s;r',s'}(h \otimes g) = h \otimes g^\vee.
\end{equation}
and from \eqref{e:deltaf} that
\begin{equation}\label{e:deltaLf}
\delta_{L;r,s;0,0}(f) = \varepsilon_s \circ (f \otimes (L^\vee)^{\otimes s}).
\end{equation}
By \eqref{e:deltatens}, we have
\begin{multline}\label{e:deltaLtens}
\delta_{L;r_1,s_1;r_1{}\!',s_1{}\!'}(f_1) \mathbin{\widetilde{\otimes}}
\delta_{L;r_2,s_2;r_2{}\!',s_2{}\!'}(f_2) = \\
= \delta_{L;r_1 + r_2,s_1 + s_2;r_1{}\!' + r_2{}\!',s_1{}\!' + s_2{}\!'}
(\sigma' \circ (f_1 \otimes f_2) \circ \sigma)
\end{multline}
for appropriate symmetries $\sigma$ and $\sigma'$.
By \eqref{e:deltaMcomp}, we have
\begin{equation}\label{e:deltaLcomp}
\delta_{L;r',s';r'',s''}(f') \circ \delta_{L;r,s;r',s'}(f) =
\delta_{L;r,s;r'',s''}(\tr_{L^{\otimes (r'+s')}}
(\sigma_2 \circ (f' \otimes f) \circ \sigma_1)).
\end{equation}
for appropriate symmetries $\sigma_1$ and $\sigma_2$.
We have
\begin{equation}\label{e:deltacomp}
\delta_{L;r,t;0,0}(g \circ f)  =
(\delta_{L;r,s;0,0}(f) \mathbin{\widetilde{\otimes}} \delta_{L;s,t;0,0}(g)) \circ
\delta_{L;r,t;r+s,s+t}(1_{L^{\otimes (r+s+t)}}),
\end{equation}
by \eqref{e:deltaNcomp}.

Let $G$ be a linear algebraic group over $k$ and $\rho$ be a central $k$\nd point
of $G$ with $\rho^2 = 1$.
Let $E$ be a finite-dimensional $G$\nd module and $R$ be a commutative algebra in $\REP(G,\rho)$.
Then $E$ in $\REP(G,\rho)$ and $E_R$ in the $k$\nd tensor category of
$(G,R)$\nd modules are dualisable.
Suppose chosen duality pairings for $E$ and $E_R$.
Then we have a $G$\nd module $E^{r,s}$ and a $(G,R)$\nd module $(E_R)^{r,s}$
for every $r$ and $s$.
We have canonical embeddings $E \to E_R$ and $E^\vee \to (E_R)^\vee$,
which are compatible with the units and counits of
the chosen duality pairings for $E$ and $E_R$.
They define a canonical embedding
$E^{r,s} \to (E_R)^{r,s}$, which induces an isomorphism of $(G,R)$\nd modules
$(E^{r,s})_R \iso (E_R)^{r,s}$.
Given $u:E^{r,s} \to E^{r',s'}$, we write $u_{R;r,s;r',s'}$
for the unique morphism of $(G,R)$\nd modules
for which the square
\begin{equation}\label{e:daggerdef}
\begin{CD}
(E_R)^{r,s}  @>{u_{R;r,s;r',s'}}>>  (E_R)^{r',s'}   \\
@AAA                              @AAA        \\
E^{r,s}      @>{u}>>             E^{r',s'}
\end{CD}
\end{equation}
commutes, with the vertical arrows the canonical embeddings.
Then $(-)_{R;r,s;r',s'}$ preserves identities and composites,
counits $E^{r,r} \to E^{0,0}$ and $(E_R)^{r,r} \to (E_R)^{0,0}$
and (with the identification $E^{r,0} = E^{\otimes r}$)
commutes with the isomorphisms $\delta_E$ and $\delta_{E_R}$.
For each $r$ and $s$ we have an isomorphism
\begin{equation}\label{e:psidef}
\psi_{r,s}:\Hom_{G,R}((E_R)^{r,s},R) \iso \Hom_G(E^{r,s},R),
\end{equation}
given by composing with the canonical embedding $E^{r,s} \to (E_R)^{r,s}$.
Then
\begin{equation}\label{e:psinat}
\psi_{r,s}(w' \circ u_{R;r,s;r',s'}) = \psi_{r',s'}(w') \circ u
\end{equation}
for every $w':(E_R)^{r',s'} \to R$ and $u:E^{r,s} \to E^{r',s'}$.

Suppose that $G$ is reductive and that $R^G = k$,
so that $R$ has a unique maximal $G$\nd ideal $J$.
Let $N$ be a dualisable $(G,R)$\nd module and $f:R \to N$ be a morphism of $(G,R)$\nd modules
which does not factor through $J N$.
Then $f$ has a left inverse.
Indeed $f^\vee$ does not factor through $J$,
because $f$ is the composite of the unit for $N^\vee$ with $N \otimes_R f^\vee$.
Hence $f^\vee$ is surjective, and there is an $x$ in its source
fixed by $G$ with $f^\vee(x) = 1$.
Thus $f^\vee$ has a unique right inverse $g$ with $g(1) = x$, and
$g^\vee$ is left inverse to $f = f^{\vee \vee}$.

\section{Kimura objects}\label{s:Kimobj}

Let $k$ be a field of characteristic $0$ and $\sC$ be a $k$\nd tensor category
with $\End_\sC(\I) = k$.
An object $L$ of $\sC$ will be called positive (resp.\ negative) if it is dualisable and
$\bigwedge^{r+1} L$ (resp.\ $S^{r+1} L$) is $0$ for some $r$.
An object of $\sC$ will be called a Kimura object if it is the direct sum of a positive
and a negative object of $\sC$.

Let $L$ be a Kimura object of $\sC$.
Then $L = L_0 \oplus L_1$ with $L_0$ positive and $L_1$ negative.
Denote by $l_0$ (resp.\ $l_1$) the least $r$ such that
$\bigwedge^{r+1} L_0$ (resp.\ $S^{r+1} L_1$) is $0$, and let $G$ and $E$ be
as in \eqref{e:Gdef} and \eqref{e:Edef}, and $\rho$ be the central $k$\nd point
of $G$ which acts as $(-1)^i$ on $E_i$.
The goal of this section is to construct a commutative algebra
$R$ in $\REP(G,\rho)$ and an isomorphism
\begin{equation}\label{e:xiiso}
\xi_{r,s}:\Hom_{G,R}((E_R)^{\otimes r},(E_R)^{\otimes s})
\iso \Hom_\sC(L^{\otimes r},L^{\otimes s})
\end{equation}
for every $r$ and $s$, such that the $\xi$ preserve composites and symmetries and are compatible
with $\otimes_R$ and $\otimes$.

Given an object $M$ of $\sC$, write $a_{M,0,r}$ (resp.\ $a_{M,1,r}$) for the image
of the antisymmetrising (resp.\ symmetrising) idempotent of $k[\mathfrak{S}_r]$ under the
$k$\nd homomorphism to $\End(M^{\otimes r})$ that sends $\tau$ in
$\mathfrak{S}_r$ to $M^{\otimes \tau}$.
If $M$ is dualisable of rank $d$, then applying \eqref{e:symcontr} with the $f_j$ the identities
shows that
\[
(r+1)\tr_M(a_{M,i,r+1}) = (d - (-1)^i r)a_{M,i,r}
\]
for $i = 0,1$.
If $M$ is positive (resp.\ negative), it follows that
$d$ (resp.\ $-d$) is the least $r$ for which $\bigwedge^{r+1} M$ (resp.\ $S^{r+1} M$)
is $0$.
Thus $L_i$ has rank $(-1)^i l_i$.

Write $b$ for the automorphism of $L$ that sends $L_i$ to $L_i$ and acts on it as $(-1)^i$.
Then for every $r$, the group $\Gamma_r$ of \eqref{e:semidir} acts on $L^{\otimes r}$
with the action of $(\Z/2)^r$ the $r$th tensor power of the action $i \mapsto b^i$ of
$\Z/2$ on $L$, and the action of $\mathfrak{S}_r$ that given by $\tau \mapsto L^{\otimes \tau}$.
Thus we obtain a homomorphism
\[
\alpha_r:k[\Gamma_r] \to \End_\sC(L^{\otimes r})
\]
of $k$\nd algebras.
If $l_i < r$, then $\alpha_i$ sends the element $x_{i,r}$ of \eqref{e:xir} to the projection onto the
direct summand $\bigwedge^{l_0 + 1}L_0 \otimes_k L^{\otimes (r-l_0-1)}$ when $i = 0$ and
$S^{l_1 + 1}L_1 \otimes_k L^{\otimes (r-l_0-1)}$ when $i = 1$.
Thus both $x_{0,r}$ and $x_{1,r}$ lie in the kernel of $\alpha_r$.
If we write $\beta_r$ for \eqref{e:semidirhom}, it follows by Lemma~\ref{l:GL}~\ref{i:homsurj} that
the kernel of $\alpha_r$ contains that of $\beta_r$.
Hence by Lemma~\ref{l:GL}
there is for each $r$ and $r'$ a unique $k$\nd linear map
\[
\varphi_{r;r'}:\Hom_G(E^{\otimes r},E^{\otimes r'}) \to \Hom_\sC(L^{\otimes r},L^{\otimes r'})
\]
such that
\[
\alpha_r = \varphi_{r;r} \circ \beta_r
\]
for every $r$.
By construction, the $\varphi_{r;r'}$ preserve symmetries, identities and composites,
and they are compatible with $\otimes_k$ and $\otimes$.
Applying \eqref{e:symcontr} $t$ times shows that for $v:E^{\otimes (r+t)} \to E^{\otimes (r'+t)}$ we have
\begin{equation}\label{e:phicontr}
\varphi_{r;r'}(\tr_{E^{\otimes t}}(v)) = \tr_{L^{\otimes t}}(\varphi_{r + t;r' + t}(v)),
\end{equation}
because $\tr(\rho^i) = \tr(b^i) = l_0 - (-1)^il_1$.

For every $r,s$ and $r',s'$, we define a $k$\nd linear map
$\varphi_{r,s;r's'}$ by requiring that the square
\[
\begin{CD}
\Hom_G(E^{r,s},E^{r',s'}) @>{\varphi_{r,s;r's'}}>>  \Hom_\sC(L^{r,s},L^{r',s'}) \\
@A{\delta_{E;r,s;r's'}}AA                             @AA{\delta_{L;r,s;r's'}}A \\
\Hom_G(E^{\otimes (r+s')},E^{\otimes (r'+s)})  @>{\varphi_{r+s';r'+s}}>>
                                        \Hom_\sC(L^{\otimes (r+s')},L^{\otimes (r'+s)})
\end{CD}
\]
commute, with the $\delta$ the isomorphisms of \eqref{e:deltaLdef}.
Then by \eqref{e:deltaLhg} the $\varphi_{r,s;r's'}$ preserve identities,
and by \eqref{e:deltaLcomp} and
\eqref{e:phicontr} they preserve composites.
By \eqref{e:deltaLtens}, they are compatible
with the bilinear products, defined as in \eqref{e:tildedef},
$\widetilde{\otimes}_k$ on $G$\nd homomorphisms
between the $E^{r,s}$ and $\widetilde{\otimes}$ on morphisms between the $L^{r,s}$.
By \eqref{e:deltaLhg},
they send symmetries permuting the factors $E$ or $E^\vee$
of $E^{r,s}$ to the corresponding symmetries of $L^{r,s}$.

We now define as follows a commutative algebra $R$ in  $\REP(G,\rho)$.
Consider the small category $\sL$ whose objects are triples $(r,s,f)$ with $r$ and
$s$ integers $\ge 0$ and $f:L^{r,s} \to \I$, where
a morphism from $(r,s,f)$ to $(r',s',f')$ in $\sL$ is a morphism
$u:E^{r,s} \to E^{r',s'}$ such that
\[
f = f' \circ \varphi_{r,s;r',s'}(u).
\]
Then we define $R$ as the colimit
\[
R = \colim_{(r,s,f) \in \sL} E^{r,s}
\]
in $\REP(G,\rho)$.
Write the colimit injection at $(r,s,f)$ as
\[
i_{(r,s,f)}:E^{r,s} \to R.
\]
We define the unit $\I \to R$ of $R$ as $i_{(0,0,1_\I)}$.
We define the multiplication $R \otimes_k R \to R$ by requiring that
for every  $((r_1,s_1,f_1),(r_2,s_2,f_2))$ in $\sL \times \sL$ the square
\begin{equation}\label{e:musquare}
\begin{CD}
E^{r_1,s_1} \otimes_k E^{r_2,s_2}  @>{\sim}>>    E^{r_1+r_2,s_1+s_2}  \\
@V{i_{(r_1,s_1,f_1)} \otimes_k i_{(r_1,s_1,f_2)}}VV
                      @VV{i_{(r_1+r_2,s_1+s_2,f_1 \mathbin{\widetilde{\otimes}} f_2)}}V \\
R \otimes_k R  @>>>                   R
\end{CD}
\end{equation}
should commute,
where the top isomorphism is that of \eqref{e:tildedef} with $E$ for $L$.
Such an $R \otimes_k R \to R$ exists and is unique because
the left vertical arrows of the squares \eqref{e:musquare} form a colimiting cone
by the fact that $\otimes_k$ preserves colimits, while their top right legs form
a cone by the compatibility of the $\varphi_{r,s;r's'}$ with $\widetilde{\otimes}_k$
and $\widetilde{\otimes}$.
The associativity of the multiplication can be checked by writing
$R \otimes_k R \otimes_k R$ as a colimit over $\sL \times \sL \times \sL$
and using the associativity of $\widetilde{\otimes}$.
The commutativity follows from \eqref{e:tildecom} and the compatibility of
the $\varphi_{r,s;r's'}$  with the symmetries.

Since $G$ is reductive, each $\Hom_G(E^{r,s},-)$ preserves colimits.
Hence
the
\[
\Hom_G(E^{r,s},i_{(r',s',f')}):\Hom_G(E^{r,s},E^{r',s'}) \to \Hom_G(E^{r,s},R).
\]
form a colimiting cone of $k$\nd vector spaces.
Thus for every $r$ and $s$ there is a unique homomorphism
\[
\theta_{r,s}:\Hom_G(E^{r,s},R) \to \Hom_\sC(L^{r,s},\I)
\]
whose composite with $\Hom_G(E^{r,s},i_{(r',s',f')})$ sends $u:E^{r,s} \to E^{r',s'}$ to
\[
f' \circ \varphi_{r,s;r',s'}(u).
\]
Further $\theta_{r,s}$ is an isomorphism, with inverse sending $f:L^{r,s} \to \I$
to $i_{(r,s,f)}$.
Thus every $E^{r,s} \to R$ can be written uniquely in the form $i_{(r,s,f)}$.
It follows that
\begin{equation}\label{e:theta0nat}
\theta_{r,s}(v' \circ u) = \theta_{r',s'}(v') \circ \varphi_{r,s;r',s'}(u)
\end{equation}
for $v':E^{r',s'} \to R$, that $\theta_{0,0}$ sends the identity $k \to R$ of $R$ to $1_{\I}$,
and that
\begin{equation}\label{e:theta0tens}
\theta_{r_1+r_2,s_1+s_2}(v) =
\theta_{r_1,s_1}(v_1) \mathbin{\widetilde{\otimes}} \theta_{r_2,s_2}(v_2)
\end{equation}
for $v_1:E^{r_1,s_1} \to R$ and $v_2:E^{r_2,s_2} \to R$, where $v$ is defined by a diagram of the
form \eqref{e:musquare} with left arrow $v_1 \otimes_k v_2$ and right arrow $v$.

Composing the isomorphisms $\psi_{r,s}$ of \eqref{e:psidef} and $\theta_{r,s}$
gives an isomorphism
\[
\widehat{\theta}_{r,s} = \theta_{r,s} \circ  \psi_{r,s}:
\Hom_{G,R}((E_R)^{r,s},R) \iso \Hom_\sC(L^{r,s},\I).
\]
Then with $u_{R;r,s;r',s'}$ as in \eqref{e:daggerdef},
we have by \eqref{e:psinat} and \eqref{e:theta0nat}
\begin{equation}\label{e:thetanat}
\widehat{\theta}_{r,s}(w' \circ u_{R;r,s;r',s'}) =
\widehat{\theta}_{r',s'}(w') \circ \varphi_{r,s;r',s'}(u)
\end{equation}
for every $w':(E_R)^{r',s'} \to R$ and $u:E^{r,s} \to E^{r',s'}$.
Also $\widehat{\theta}_{0,0}(1_R) = 1_{\I}$, and
\begin{equation}\label{e:thetatens}
\widehat{\theta}_{r_1+r_2,s_1+s_2}(w_1 \mathbin{\widetilde{\otimes}}_R w_2) =
\widehat{\theta}_{r_1,s_1}(w_1) \mathbin{\widetilde{\otimes}} \widehat{\theta}_{r_2,s_2}(w_2)
\end{equation}
for every $w_1:(E_R)^{r_1,s_1} \to R$ and $w_2:(E_R)^{r_2,s_2} \to R$,  by \eqref{e:theta0tens}.

We now define the isomorphism \eqref{e:xiiso} by requiring that the square
\[
\begin{CD}
\Hom_{G,R}((E_R)^{\otimes r},(E_R)^{\otimes s}) @>{\xi_{r,s}}>> \Hom_\sC(L^{\otimes r},L^{\otimes s}) \\
@V{\delta_{E_R;r,s,0,0}}VV                                                   @VV{\delta_{L;r,s,0,0}}V\\
\Hom_{G,R}((E_R)^{r,s},R)      @>{\widehat{\theta}_{r,s}}>>                 \Hom_\sC(L^{r,s},\I)
\end{CD}
\]
commute.
The $\xi$ preserve composites by
\eqref{e:deltacomp}, \eqref{e:thetanat}, \eqref{e:thetatens},
and the fact that $(-)_{R;r,s;r',s'}$ preserves identities and is compatible
with $\delta_E$ and $\delta_{E_R}$.
They are compatible with $\otimes_R$ and $\otimes$ by
\eqref{e:deltaLtens}, where the relevant $\sigma$ and $\sigma'$ reduce to associativities,
and \eqref{e:thetatens}.
They are compatible with the symmetries by \eqref{e:thetanat}
with $w' = 1_R$ and $u$ the composite of $\sigma \otimes_k (E^\vee)^{\otimes r}$
for $\sigma$ a symmetry of $E^{\otimes r}$ with the counit $E^{r,r} \to k$,
using \eqref{e:deltaLf} and the compatibility of $(-)_{R;r,s;r',s'}$ with symmetries,
composites, and counits.

\section{Kimura varieties}\label{s:Kimvar}

We denote by $\sM_{\sim}(F)$ the category of ungraded $\Q$\nd linear motives over $F$
for the equivalence relation ${\sim}$.
It is a $\Q$\nd tensor category.
There is a contravariant functor $h$ from the category $\sV_F$ of
smooth projective varieties over $F$ to  $\sM_{\sim}(F)$,
which sends products in $\sV_F$ to tensor products in  $\sM_{\sim}(F)$.
We then have
\begin{equation}\label{e:HomChow}
\Hom_{\sM_{\sim}(F)}(h(X'),h(X)) = CH(X' \times_F X)_\Q/{\sim},
\end{equation}
and the composite $z \circ z'$ of $z':h(X'') \to h(X')$ with
$z:h(X') \to h(X)$ is given by
\[
z \circ z' = (\pr_{13})_*((\pr_{12})^*(z').(\pr_{23})^*(z)),
\]
where the projections are from $X'' \times_F X' \times_F X$.
Further $h(q)$ for $q:X \to X'$ is the push forward of $1$ in $CH(X)_\Q/{\sim}$
along $X \to X' \times_F X$ with components $q$ and $1_X$.

The images under $h$ of the structural morphism and diagonal of $X$
define on $h(X)$ a canonical structure of commutative
algebra in $\sM_{\sim}(F)$.
With this structure \eqref{e:HomChow} reduces when $X' = \Spec(F)$ to an
equality of algebras
\[
\Hom_{\sM_{\sim}(F)}(\I,h(X)) = CH(X)_\Q/{\sim}.
\]
Also $h(X)$ is canonically autodual: we have canonical duality pairing
\[
(h(X),h(X),\eta_X,\varepsilon_X),
\]
with both $\eta_X$ and $\varepsilon_X$
the class in $CH(X \times_F X)_\Q/{\sim}$ of the diagonal of $X$.
The canonical duality pairing for $h(X \times_F X')$ is the tensor product of those for
$h(X)$ and $h(X')$.
The canonical duality pairings define a transpose $(-)^\vee$ for morphisms $h(X') \to h(X)$,
given by pullback of cycles along
the symmetry interchanging $X$ and $X'$.
For $q:X \to X'$ and $z \in CH(X)_\Q/{\sim}$ and $z' \in CH(X')_\Q/{\sim}$, we have
\begin{equation}\label{e:pull}
q^*(z') = h(q) \circ z'
\end{equation}
and
\begin{equation}\label{e:push}
q_*(z) = h(q)^\vee \circ z.
\end{equation}

A \emph{Kimura variety for ${\sim}$} is a smooth projective variety $X$ over $F$ such
that $h(X)$ is a Kimura object in $\sM_{\sim}(F)$.
If the motive of $X$ in the category of \emph{graded} motives for $\sim$ is a Kimura object,
then $X$ is a Kimura variety for $\sim$.
The converse also holds, as can be seen by factoring out the tensor ideals
of tensor nilpotent morphisms, but this will not be needed.

Let $X$ be a Kimura variety for ${\sim}$.
We may apply the construction of Section~\ref{s:Kimobj} with $k = \Q$, $\sC = \sM_{\sim}(F)$
and $L = h(X)$.
For appropriate $l_0$ and $l_1$, we then have with $G$, $E$ and $\rho$ as in
Section~\ref{s:Kimobj} a commutative algebra $R$ in $\REP(G,\rho)$ and isomorphisms
\[
\xi_{r,s}:\Hom_{G,R}((E_R)^{\otimes r},(E_R)^{\otimes s}) \iso
\Hom_{\sM_{\sim}(F)}(h(X)^{\otimes r},h(X)^{\otimes s})
\]
which are compatible with composites, tensor products, and symmetries.

The homomorphisms of $R$\nd modules $\iota$ and $\mu$ with
respective images under $\xi_{0,1}$ and $\xi_{2,1}$ the unit
and multiplication of $h(X)$ define a structure of
commutative $R$\nd algebra on $E_R$.
Also the homomorphisms $\eta_1$ and
$\varepsilon_1$ with respective images $\eta_X$ and $\varepsilon_X$
under $\xi_{0,2}$ and $\xi_{2,0}$ are the unit and counit
a duality pairing $(E_R,E_R,\eta_1,\varepsilon_1)$ for $E_R$.
We denote by
\[
((E_R)^{\otimes r},(E_R)^{\otimes r},\eta_r,\varepsilon_r)
\]
its $r$th tensor power.
Then $\xi_{0,2r}(\eta_r) = \eta_{X^r}$ and
$\xi_{2r,0}(\varepsilon_r) = \varepsilon_{X^r}$.
For any $(G,R)$\nd homomorphism $f$ from $(E_R)^{\otimes m}$ to $(E_R)^{\otimes l}$
we have
\[
\xi_{l,m}(f^\vee) = \xi_{m,l}(f)^\vee,
\]
where the transpose of $f$ is taken using duality pairings just defined.
Further
\[
\xi_{0,n}:\Hom_{G,R}(R,(E_R)^{\otimes n}) \iso \Hom_{\sM_{\sim}(F)}(\I,h(X)^{\otimes n})
\]
is an isomorphism of $\Q$\nd algebras.
We note that
\[
R^G = \Hom_{G,R}(R,R) = CH(\Spec(F))_\Q/{\sim} = \Q,
\]
by the isomorphism $\xi_{0,0}$.

\section{Proof of Theorem~\ref{t:fin}}\label{s:finproof}

To prove Theorem~\ref{t:fin}, we may suppose that $Z_1$ contains the classes of
the equidimensional components of $X$, and that $Z_n$ contains the homogeneous
components of each of its elements for the grading of $CH(X^n)_\Q/{\sim}$.
Denote by $\sA$ the set of those families $C = ((C_n)^i)_{n,i \in \N}$
with $(C_n)^0$ a $\Q$\nd subalgebra $C_n$ of $CH(X^n)_\Q/{\sim}$
and $((C_n)^i)_{i \in \N}$ a filtration of the algebra $C_n$, such that \ref{i:pullpush},
\ref{i:fin} and \ref{i:nilp} of Theorem~\ref{t:fin} hold.
It is to be shown that there is a $C$ in $\sA$ which is graded, i.e.\
such that $(C_n)^i$ is a graded $\Q$\nd vector subspace of $CH(X^n)_\Q/{\sim}$ for each $n$ and $i$.
For $\lambda \in \Q^*$, define an endomorphism $z \mapsto \lambda * z$ of the algebra
$CH(X^n)_\Q/{\sim}$ by taking $\lambda * z = \lambda^j z$ when $z$ is homogeneous
of degree $j$.
Then the graded subspaces of $CH(X^n)_\Q/{\sim}$ are those that are stable under each
$\lambda * -$.
For each $C$ in $\sA$ we have a $\lambda * C$ in $\sA$ with $((\lambda * C)_n)^i$
the image under $\lambda * -$ of $(C_n)^i$.
Indeed $(\lambda * C)_n$ contains $Z_n$
by the homogeneity assumption on $Z_n$, and
$p_*$ sends $((\lambda * C)_l)^i$ to $((\lambda * C)_m)^i$ for $p$ as in
\ref{i:pullpush}, because $C_l$ contains the classes of the equidimensional
components of each factor $X$ of $X^l$ by the assumption on $Z_1$.
The $C$ in $\sA$ that are graded are then those fixed by each $\lambda * -$.
Now if $\sA$ is non-empty, it has a least element for the ordering of the $C$
by inclusion of the $(C_n)^i$.
Such a least element will be
fixed by the $\lambda * -$, and hence graded.
It will thus suffice to show that $\sA$ is non-empty.

Let $G$, $E$, $\rho$, $R$, $\xi_{r,s}$, $\eta_r$, $\varepsilon_r$, $\iota$ and $\mu$ be as in Section~\ref{s:Kimvar}.
With the identification
\begin{equation}\label{e:CHXn}
\Hom_{\sM_{\sim}(F)}(\I,h(X)^{\otimes n}) = CH(X^n)_\Q/{\sim},
\end{equation}
there exists a finitely generated $G$\nd subalgebra $R'$ of $R$ such that
if we write $\beta_{m,n}$ for the homomorphism \eqref{e:extscal} with $P = E$,
then $(\xi_{0,n})^{-1}(Z_n)$ is contained in the image of $\beta_{0,n}$ for every $n$,
and $\eta_1 = \beta_{0,2}(\eta'{}\!_1)$,
$\varepsilon_1 = \beta_{2,0}(\varepsilon'{}\!_1)$, $\iota = \beta_{0,1}(\iota')$
and $\mu = \beta_{2,1}(\mu')$ for some $\varepsilon'{}\!_1$, $\eta'{}\!_1$,
$\iota'$ and $\mu'$.
We then have duality pairing $(E_{R'},E_{R'},\eta'{}\!_1,\varepsilon'{}\!_1)$ for $E_{R'}$,
and if its $r$th tensor power is
\[
((E_{R'})^{\otimes r},(E_{R'})^{\otimes r},\eta'{}\!_r,\varepsilon'{}\!_r),
\]
we have $\eta_r = \beta_{0,2r}(\eta'{}\!_r)$
and $\varepsilon_r = \beta_{2r,0}(\varepsilon'{}\!_r)$.
Further $\iota'$ and $\mu'$ define a structure of commutative $(G,R')$\nd algebra on $E_{R'}$.
The $\beta_{m,l}$, and hence their composites
\[
\xi'{}\!_{m,l}:\Hom_{G,R'}((E_{R'})^{\otimes m},(E_{R'})^{\otimes n})
\to \Hom_{\sM_{\sim}(F)}(h(X)^{\otimes n},h(X)^{\otimes n})
\]
with the $\xi_{m,n}$,
preserve identities, composition, tensor products, and transposes
defined using the $\eta'{}\!_r$ and $\varepsilon'{}\!_r$.
Further $\xi'{}\!_{0,n}$ is a homomorphism of $\Q$\nd algebras.

We have $R'{}^G = R^G = \Q$.
Thus by Lemma~\ref{l:repfin}~\ref{i:algfin}, the $\Q$\nd algebra
\[
\Hom_{G,R'}(R',(E_{R'})^{\otimes n}) \iso \Hom_G(k,(E_{R'})^{\otimes n})
= ((E_{R'})^{\otimes n})^G
\]
is finite-dimensional for every $n$.
Denote by $J'$ the unique maximal $G$\nd ideal of $R'$.
Then we have for each $n$ a filtration of the $(G,R')$\nd algebra $(E_{R'})^{\otimes n}$
by the $G$\nd ideals
\[
J'{}^r (E_{R'})^{\otimes n},
\]
and hence a filtration of the $\Q$\nd algebra $\Hom_{G,R'}(R',(E_{R'})^{\otimes n})$
by the ideals
\begin{equation}\label{e:homideal}
\Hom_{G,R'}(R',J'{}^r(E_{R'})^{\otimes n}).
\end{equation}
Since \eqref{e:homideal} is isomorphic to $(J'{}^r(E_{R'})^{\otimes n})^G$,
it is $0$ for $r$ large, by Lemma~\ref{l:repfin}~\ref{i:idealcompl}.

We now define an element $C$ of $\sA$ as follows.
With the identification \eqref{e:CHXn}, take for $C_n$ the image of $\xi'{}\!_{0,n}$,
and for $(C_n)^r$ the image under $\xi'{}\!_{0,n}$ of \eqref{e:homideal}.
Then \ref{i:fin} holds.

Let $z = \xi'{}\!_{0,n}(x)$ be an element of $C_n$ which does not lie in $(C_n)^1$.
Then $x$ does not factor through $J'(E_{R'})^{\otimes n}$.
As was seen at the end of Section~\ref{s:dual}, this implies that $x$ has a left
inverse.
Hence $z$ has a left inverse $y:h(X)^{\otimes n} \to \I$.
Identifying $y$ with an element of $CH(X^n)_\Q/{\sim}$, the composite
$y \circ z = 1_\I$ is the push forward of $y.z$ along the structural morphism of $X^n$.
Thus $z$ is not numerically equivalent to $0$.
The first statement of \ref{i:nilp} follows.
The second statement of \ref{i:nilp} follows from the fact that
\eqref{e:homideal} is $0$ for  $r$ large.

Let $p:X^l \to X^m$ be as in \ref{i:pullpush}.
If $p$ is defined by $\nu:[1,m] \to [1,l]$,
then
\[
h(p):h(X)^{\otimes m} \to h(X)^{\otimes l}
\]
is the morphism of commutative algebras in $\sM_{\sim}(F)$ defined by $\nu$.
Thus
\[
h(p) = \xi'{}\!_{m,l}(f)
\]
for $f:(E_{R'})^{\otimes m} \to (E_{R'})^{\otimes l}$ the morphism of
commutative $(G,R')$\nd algebras
defined by $\nu$.
That $p^*$ sends $C_m$ to $C_l$ and respects the filtrations now follows from
\eqref{e:pull} and the compatibility of the $\xi'{}\!_{m,l}$ with composites.
That $p_*$ sends $C_l$ to $C_m$ and respects the filtrations follows from
\eqref{e:push} and the compatibility of the $\xi'{}\!_{m,l}$ with composites
and transposes.
Thus \ref{i:pullpush} holds.

\section{Proof of Theorem~\ref{t:num}}\label{s:numproof}

Let $G$, $E$, $\rho$, $R$, $\xi_{r,s}$, $\eta_r$ and $\varepsilon_r$ be as in
Section~\ref{s:Kimvar}, and suppose that the equivalence relation $\sim$
is numerical equivalence.
We show first that $R$ is $G$\nd simple, i.e.\ has no $G$\nd ideals other
than $0$ and $R$.
Any non-zero $z:h(X)^{\otimes m} \to \I$ has a right inverse
$y$, because $z \circ y$ is the push forward of $z.y$
along the structural isomorphism of $X^m$.
The isomorphisms $\xi$ then show that any non-zero $(E_R)^{\otimes m} \to R$
has a right inverse, and is thus surjective.
Let $J \ne 0$ be a $G$\nd ideal of $R$.
Since $G$ is reductive and $E$ is a faithful representation of $G$,
the category of finite-dimensional representations of $G$ is the pseudo-abelian hull
of its full subcategory with objects the $E^{r,s}$ (\cite{Wat},~3.5).
Thus for some $r,s$ there is a non-zero homomorphism of $G$\nd modules from $E^{r,s}$ to $R$
which factors through $J$.
It defines by the isomorphism \eqref{e:psidef} a non-zero homomorphism of $(G,R)$\nd modules $f$ from
$(E_R)^{r,s}$ to $R$ which also factors through $J$.
Since $(E_R)^{r,s}$ is isomorphic by autoduality of $E_R$ to $(E_R)^{r+s}$,
it follows that $f$ is surjective, so that $J = R$.
Thus $R$ has no $G$ ideals other than $0$ and $R$.

If $R_1$ is the $G$\nd submodule of $R$ on which $\rho$ acts as $-1$, then
the ideal of $R$ generated by $R_1$ is a $G$\nd ideal $\ne R$, because the elements
of $R_1$ have square $0$.
Thus $R_1 = 0$, so that $R$ is commutative as an algebra in $\REP(G)$.
By a theorem of Magid (\cite{Mag}, Theorem~4.5),
the $G$\nd simplicity of $R$ and the fact that $R^G = \Q$ then imply that $\Spec(R)_k$
is isomorphic to $G_k/H$ for some extension $k$ of $\Q$ and closed subgroup $H$ of $G_k$.
Thus $R$ is a finitely generated  $\Q$\nd algebra.
Hence there exists an $n$ such that a set of generators of
$R$ is contained
in the sum of the images of the $G$\nd homomorphisms $E^{r,s} \to R$ for $r+s \le n$.
We may suppose that $n \ge 2$.
We show that $n$ satisfies the requirements of Theorem~\ref{t:num}.

Denote by $U_m$ the $\Q$\nd vector subspace of $\overline{CH}(X^m)_\Q = CH(X^m)_\Q/\sim$
generated by the elements \eqref{e:numgen}, and by
\[
U_{m,l} \subset
\Hom_{G,R}((E_R)^{\otimes m},(E_R)^{\otimes l})
\]
the inverse image of
\[
U_{m+l} \subset
\Hom_{\sM_{\sim}(F)}(h(X)^{\otimes m},h(X)^{\otimes l})
= \overline{CH}(X^{m+l})_\Q
\]
under $\xi_{m,l}$.
The symmetries of $(E_R)^{\otimes m}$ lie in $U_{m,m}$,
because by Proposition~\ref{p:Chowsub} the symmetries of $h(X)^{\otimes m}$ lie
in $U_{2m}$.
Similarly the composite of an element of $U_{m,m'}$ with an element of $U_{m',m''}$ lies in
$U_{m,m''}$, the tensor product of an element of $U_{m,l}$ with an element of
$U_{m',l'}$ lies in $U_{m+m',l+l'}$,
and $\eta_m$ lies in $U_{0,2m}$ and $\varepsilon_m$ lies in $U_{2m,0}$.
Also $U_{m,l}$ coincides with
$\Hom_{G,R}((E_R)^{\otimes m},(E_R)^{\otimes l})$
for $m+l \le n$.

Since $E_R$ is canonically autodual, we may identify $(E_R)^{r,s}$ with $(E_R)^{\otimes (r+s)}$.
The morphism $u_{R;r,s;r',s'}$ of \eqref{e:daggerdef} may then be identified with a morphism
of $R$\nd modules
\[
u_{R;r,s;r',s'}:(E_R)^{\otimes (r+s)} \to (E_R)^{\otimes (r'+s')},
\]
and the isomorphism $\psi_{r,s}$ of \eqref{e:psidef} with an isomorphism
\[
\psi_{r,s}:\Hom_{G,R}((E_R)^{\otimes (r+s)},R) \iso \Hom_G(E^{r,s},R).
\]
Then \eqref{e:psinat} still holds. Also we have a commutative square
\begin{equation}\label{e:psicompat}
\begin{CD}
E^{r_1,s_1} \otimes_\Q E^{r_2,s_2}    @>{\sim}>>   E^{r_1 + r_2,s_1 + s_2}  \\
@V{\psi_{r_1,s_1}(f_1) \otimes_\Q \psi_{r_2,s_2}(f_2)}VV  @VV{\psi_{r_1+r_2,s_1+s_2}(f)}V \\
R \otimes_\Q R   @>>>                             R
\end{CD}
\end{equation}
where the top isomorphism is that of \eqref{e:tildedef} with $E$ for $L$, the bottom
arrow is the multiplication of $R$, and $f$ is the composite of the appropriate
symmetry of $(E_R)^{\otimes (r_1+r_2+s_1+s_2)}$ with $f_1 \otimes_R f_2$.

By Lemma~\ref{l:GL}, a non zero $w:E^{r',0} \to E^{r,0}$ exists only if $r = r'$, when any such $w$
is a composite of symmetries and tensor products of endomorphisms of $E$.
Thus $w_{R;r',0;r,0}$ lies in $U_{r',r}$ for such a $w$, because $n \ge 2$.
Since $(-)_{R;r,s;r',s'}$ commutes with the isomorphisms $\delta$ of \eqref{e:deltaLdef}, it follows
that $w_{R;r',s';r,s}$ lies in $U_{r'+s',r+s}$ for any $w:E^{r',s'} \to E^{r,s}$.

To prove Theorem~\ref{t:num}, write
\[
W_{r,s} = \psi_{r,s}(U_{r+s,0}).
\]
Consider the smallest $G$\nd submodule $R'$ of $R$
such that $a:E^{r,s} \to R$ factors through $R'$ for each $r$, $s$, and $a$ in $W_{r,s}$.
By \eqref{e:psicompat}, $R'$ is a subalgebra of $R$.
Since every $E^{r,s} \to R$ lies in $W_{r,s}$ when $r+s \le n$, the algebra $R'$ contains
a set of generators of $R$.
Hence $R' = R$.
Given $a:E^{r,s} \to R$, there are thus $a_i$ in $W_{r_i,s_i}$ for $i = 1,,2, \dots,t$
such that the image of $a$ lies in the sum of the images of the $a_i$.
By semisimplicity of $\REP(G,\rho)$, it follows that
\[
a = a_1 \circ w_1 + a_2 \circ w_2 + \dots + a_t \circ w_t
\]
for some $w_i$.
Hence by \eqref{e:psinat}, $a$ lies in $W_{r,s}$.
Thus $W_{r,s}= \Hom_G(E^{r,s},R)$ for every $r$ and $s$.
It follows that $U_{m,0} = \Hom_{G,R}((E_R)^{\otimes m},R)$,
and hence $U_m = \overline{CH}(X^m)_\Q$, for every $m$.
This proves Theorem~\ref{t:num}.

\section{Concluding remarks}

Theorem~\ref{t:fin} is easily generalised to the case where instead of cycles on the powers
of a single Kimura variety $X$ for ${\sim}$, we consider also cycles on products of a finite number
of such varieties: it suffices to take for $X$ their disjoint union
and to include in $Z_1$ their fundamental classes.
Similarly in the condition on $X^l \to X^m$ in \ref{i:pullpush}, we may consider
a finite number of morphisms $X^l \to X$ additional to the projections: it suffices
to include in the $Z_i$
the classes of their graphs.
Suppose for example that $X$ is an abelian variety, and
let $\Gamma$ be a finitely generated subgroup of $X(k)$.
Then we may consider in \ref{i:pullpush} pullback and push forward along any morphism
$X^l \to X^m$ which sends the identity of $X(k)^l$ to an element of $\Gamma^m$.

More generally, we can construct a small category $\sV$, an equivalence $T$  from
$\sV$ to the category Kimura varieties over $F$ for ${\sim}$, a filtered family
$(\sV_\lambda)_{\lambda \in \Lambda}$ of (not necessarily full) subcategories $\sV_\lambda$ of $\sV$
with union $\sV$,
and for each $\lambda$ in $\Lambda$ and $V$ in $\sV_\lambda$ a
finite-dimensional graded $\Q$\nd subalgebra $C_\lambda(V)$ of  $CH(T(V))_\Q/{\sim}$
and a filtration $C_\lambda(V)^\bullet$ on $C_\lambda(V)$,
with the following properties.
\begin{enumerate}
\renewcommand{\theenumi}{(\alph{enumi})}
\item
Finite products exist in the $\sV_\lambda$, and
the embeddings $\sV_\lambda \to \sV$ preserve them.
\item
We have $C_\lambda(V)^r \subset C_{\lambda'}(V)^r$ for
$\lambda \le \lambda'$ and $V$ in $\sV_\lambda$, and $CH(T(V))_\Q/{\sim}$ for $V$ in $\sV_\lambda$
is the union of the $C_{\lambda'}(V)$ for $\lambda' \ge \lambda$.
\item
$T(f)^*$ sends $C_\lambda(V')$ into $C_\lambda(V)$
and $T(f)_*$ sends $C_\lambda(V)$ into $C_\lambda(V')$ for $f:V \to V'$ in $\sV_\lambda$,
and $T(f)^*$ and $T(f)_*$ preserve the filtrations.
\item\label{i:surjnilp}
For $V$ in $\sV_\lambda$, the projection from $C_\lambda(V)$ to $\overline{CH}(T(V))_\Q$
is surjective with kernel $C_\lambda(V)^1$, and $C_\lambda(V)^r$ is $0$ for $r$ large.
\end{enumerate}

By applying the usual construction for motives (say ungraded)
to $\sV$ and the $CH(T(V))_\Q/{\sim}$ we obtain a $\Q$\nd tensor category
$\sM$ and a cohomology functor from $\sV$ to $\sM$,
and $T$ defines a fully faithful functor from $\sM$ to $\sM_{\sim}(F)$.
Similarly we obtain from $\sV_\lambda$ and the $C_\lambda(V)$ a (not necessarily full)
$\Q$\nd tensor subcategory $\sM_\lambda$ of $\sM$.
Then each $\sM_\lambda$ has finite-dimensional hom-spaces, and $\sM$ is the filtered
union of the $\sM_\lambda$.
A question involving a finite number of Kimura varieties, a finite number of morphisms
between their products, and a finite number of morphisms between the motives of such
products, thus reduces to a question in some $\sV_\lambda$ and $\sM_\lambda$.
By \ref{i:surjnilp}, the projection from $\sM_\lambda$ to the quotient of  $\sM_{\sim}(F)$
by its unique maximal tensor ideal is full,
with kernel the unique maximal
tensor ideal $\sJ_\lambda$ of $\sM_\lambda$.
Further $\sM_\lambda$ is the limit of the $\sM_\lambda/(\sJ_\lambda)^r$.
Thus we can argue by lifting successively from
the semisimple abelian category $\sM_\lambda/\sJ_\lambda$ to the
$\sM_\lambda/(\sJ_\lambda)^r$.

Theorems~\ref{t:fin} and \ref{t:num} extend easily to the case where the
base field $F$ is replaced by a non-empty connected smooth quasi-projective scheme $S$ over $F$.
For the category of ungraded motives over $S$ we then have $\End(\I) = CH(S)_\Q/{\sim}$,
which is a local $\Q$\nd algebra with residue field $\Q$ and nilpotent
maximal ideal.
All the arguments carry over to this case, provided that Lemma~\ref{l:repfin}
is proved in the more general form where the hypothesis ``$R^G = k$'' is
replaced by ``$R^G$ is a local $k$\nd algebra with residue field $k$''.

\end{document}